\documentclass[11pt]{article}
\usepackage{amssymb}
\usepackage{graphicx}
\usepackage{amsmath}
\usepackage{mathrsfs}

\oddsidemargin0.30in
\evensidemargin0in
\topmargin0.00in
\textwidth6.0in
\textheight8.25in

\newenvironment{proof}[1][Proof]{\textbf{#1.}
}{\ \rule{0.5em}{0.5em}}

\def\inbar{\,\vrule height1.5ex width.4pt depth0pt}
\def\inbar{\,\vrule height1.5ex width.4pt depth0pt}
\def\IB{\relax{\rm I\kern-.18em B}}
\def\IC{\relax\hbox{$\inbar\kern-.3em{\rm C}$}}
\def\ID{\relax{\rm I\kern-.18em D}}
\def\IE{\relax{\rm I\kern-.18em E}}
\def\IF{\relax{\rm I\kern-.18em F}}
\def\IG{\relax\hbox{$\inbar\kern-.3em{\rm G}$}}
\def\IH{\relax{\rm I\kern-.18em H}}
\def\II{\relax{\rm I\kern-.18em I}}
\def\IK{\relax{\rm I\kern-.18em K}}
\def\IL{\relax{\rm I\kern-.18em L}}
\def\IM{\relax{\rm I\kern-.18em M}}
\def\IN{\relax{\rm I\kern-.18em N}}
\def\IO{\relax\hbox{$\inbar\kern-.3em{\rm O}$}}
\def\IP{\relax{\rm I\kern-.18em P}}
\def\IQ{\relax\hbox{$\inbar\kern-.3em{\rm Q}$}}
\def\IR{\relax{\rm I\kern-.18em R}}
\font\cmss=cmss10 \font\cmsss=cmss10 at 7pt
\def\IZ{\relax\ifmmode\mathchoice
{\hbox{\cmss Z\kern-.4em Z}}{\hbox{\cmss Z\kern-.4em Z}}
{\lower.9pt\hbox{\cmsss Z\kern-.4em Z}}
{\lower1.2pt\hbox{\cmsss
Z\kern-.4em Z}}\else{\cmss Z\kern-.4em Z}\fi}
\def\IGa{\relax\hbox{${\rm I}\kern-.18em\Gamma$}}
\def\IPi{\relax\hbox{${\rm I}\kern-.18em\Pi$}}
\def\ITh{\relax\hbox{$\inbar\kern-.3em\Theta$}}
\def\IOm{\relax\hbox{$\inbar\kern-3.00pt\Omega$}}

\baselineskip 20pt
\pagenumbering{arabic}
\pagestyle{plain}

\newtheorem{defi}{Definition}[section]
\newtheorem{theo}[defi]{Theorem}
\newtheorem{lemm}[defi]{Lemma}
\newtheorem{prop}[defi]{Proposition}

\newtheorem{coro}[defi]{Corollary}
\newtheorem{rema}[defi]{Remark}

\newcommand{\be}{\begin{enumerate}}
\newcommand{\ee}{\end{enumerate}}





\begin{document}
\title{Formulas for the Multiplicity of Graded Algebras}
\maketitle
\begin{center}
\author{Yu Xie}
\footnote{This paper is based on the author's Ph.D. thesis, written under the direction of Professor  Bernd Ulrich.
The author sincerely thanks Professor Ulrich for suggesting the problem and for advice and many helpful discussions.\\
 \indent AMS 2010 Mathematics Subject Classification. Primary 13H15, 13A30; Secondary 14J70, 14B05.\\
 \indent Keywords: $j$-multiplicity, associated graded ring, special fiber ring, dual variety, Pl\"{u}cker formula
}

Department of Mathematics,\\
The University of Notre Dame, South Bend, IN 46556
\\
E-mail: yxie@nd.edu
\end{center}




\begin{abstract}
Let $R$ be a standard graded Noetherian algebra over an Artinian
local ring. Motivated by the work of Achilles and Manaresi in
intersection theory, we first express the multiplicity of $R$ by
means of local $j$-multiplicities of various hyperplane sections.
When applied to  a homogeneous
 inclusion $A\subseteq B$ of standard graded Noetherian algebras
over an Artinian local ring, this formula yields the multiplicity of $A$
in terms of that of
$B$ and of local $j$-multiplicities of hyperplane sections along
${\rm Proj}\,(B)$. Our formulas can be used to find the
multiplicity of  special fiber rings and to obtain the degree of dual varieties
 for any hypersurface.
 In particular, it gives  a
generalization of Teissier's Pl\"{u}cker formula to  hypersurfaces
 with non-isolated singularities.
 Our work generalizes results by Simis,
Ulrich and Vasconcelos on homogeneous embeddings of graded
algebras.
\end{abstract}

\bigskip

\section{Introduction.}
\noindent

Let $A=A_0[A_1]\subseteq B=B_0[B_1]$ be a homogeneous inclusion of
standard graded Noetherian rings with $A_0=B_0$ and each Artinian local.
The goal of this work is to give a formula for the multiplicity of
$A$ in terms of the multiplicity   of $B$ and of local multiplicities along
${\rm Proj}\,(B)$.

One of the main applications of this formula will be to the computation of the multiplicity of special fiber rings.
Let $R$ be a standard graded Noetherian
algebra of dimension $d$ over a field $k$ and $I$  an ideal of $R$ generated by forms of the
same degree $\delta$.  Let $\mathscr{R}(I)=\oplus_{j=0}^{\infty}I^j$ be the Rees algebra of
$I$. The special fiber ring $k\otimes_R \mathscr{R} (I)$ describes the homogeneous coordinate
ring of the image of the rational map induced by $I$. As a special case this construction yields homogeneous coordinate rings of Gauss images and of secant varieties.
It is important to compute the multiplicity of the special fiber ring.
For this goal, observe
$k\otimes_R \mathscr{R} (I)\simeq k[I_{\delta}]\subseteq k[R_{\delta}]=R^{(\delta)}$, where
$k[I_{\delta}]$ is the $k$-algebra generated by the forms in $I$
of degree $\delta$ and $R^{(\delta)}$ is the $\delta$-th Veronese subring of $R$.
After
rescaling the grading of both $k[I_{\delta}]$ and $R^{(\delta)}$,
$k[I_{\delta}]\subseteq R^{(\delta)}$ is a homogeneous inclusion
of standard graded $k$-algebras. If we could express
$e(k[I_{\delta}])$ in terms of $e(R^{(\delta)})$ and local
multiplicities along ${\rm Proj}(R^{(\delta)})$, then we can
compute  $e(k[I_{\delta}])$ using the multiplicities of $R$. Indeed, since the $i$-th homogeneous component of $R^{(\delta)}$ is the $\delta i$-th homogeneous component of $R$,
 $e(R^{(\delta)})=e(R)\cdot \delta^{d-1}$. Furthermore as ${\rm Proj}(R^{(\delta)})\simeq {\rm
Proj}(R)$,  the local multiplicities along ${\rm
Proj}(R^{(\delta)})$ do not change if we pass to the local
multiplicities along ${\rm Proj}(R)$.

For $A\subseteq B$ as above, it is easy to see that ${\rm
dim}\,A\leq {\rm  dim}\,B$. If ${\rm dim}\,A={\rm dim}\,B$, the
first case to consider is when ${\rm dim}\,B/A_1B=0$, i.e.,
$B$ is integral over $A$.  If we assume further that $A_p$ is
reduced and ${\rm rank}_{A_p} B_p=r$ for every prime ideal $p$ in
$A$ of  dimension $d$, then $e(B)=re(A)$. The questions are as follows: what happens when ${\rm
dim}\,B/A_1B\geq 1$ and how to find those local multiplicities along ${\rm Proj}(B)$?
In 2001, Simis, Ulrich and Vasconcelos gave
some partial answers:

\begin{theo}{\rm(\hspace{-.004in}\cite[6.4]{SUV})} Let
$A\subseteq B$ be a homogeneous inclusion of standard graded
Noetherian rings of the same dimension, with $A_0=B_0$  each Artinian
local and $B$
 equidimensional. Let $\mathfrak{P}=\{p\in
{\rm Spec}(A)\,|\, {\rm dim}\, A/p={\rm dim}\, A\}$ and assume
that $A_p$ is reduced and ${\rm rank}_{A_p} B_p=r$ for every prime ideal
$p\in \mathfrak{P}$. If ${\rm dim}\,B/A_1B=1$, then
\begin{align}
e(B)= re(A)+\sum_{q \in \mathfrak{Q}} e_{A_1B_q}(B_q)\, e(B/q),
\end{align}
where $\mathfrak{Q}=\{q\in V(A_1B)\cap {\rm Proj}(B) \,|\,{\rm dim}\,B/q={\rm
dim}\,B/A_1B\}$.
\end{theo}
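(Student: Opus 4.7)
The plan is first to reduce to the case where $A_0 = B_0 = k$ is an infinite field via a standard faithfully-flat base change, which preserves all hypotheses and multiplicities, and then to reduce to $d = 2$ by modding out $d - 2$ general linear forms $a_1, \dots, a_{d-2} \in A_1$. Over an infinite $k$ these can be chosen to be simultaneously superficial for $A$, $B$, each $B/q$ with $q \in \mathfrak{Q}$, and for $A_1 B_q$ in each local ring $B_q$ with $q \in \mathfrak{Q}$. This preserves all multiplicities appearing in the formula, the generic rank, the equidimensionality of $B$, and $\dim B/A_1 B = 1$ (the last because the elements lie in $A_1$).

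In the case $d = 2$, the associativity formula gives $e(A) = \sum_{p \in \mathfrak{P}} e(A/p)$ since $A_p$ is a field. I would then pick homogeneous elements $b_1, \dots, b_r \in B$ whose images form an $A_p$-basis of $B_p$ at every $p \in \mathfrak{P}$ simultaneously and form a minimal reduction of $A_1 B_q$ at every $q \in \mathfrak{Q}$; both conditions are generic open and simultaneously achievable via prime avoidance over $k$. The induced graded $A$-linear map $\phi \colon \bigoplus_i A(-\deg b_i) \to B$ is an isomorphism at each $p \in \mathfrak{P}$, so $\ker\phi$ is finitely generated over $A$ with $A$-support disjoint from $\mathfrak{P}$ and hence of Hilbert-polynomial degree at most $0$, contributing only lower-order terms.

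Comparing leading Hilbert-polynomial coefficients in the exact sequence
\[0 \to \ker\phi \to \bigoplus_i A(-\deg b_i) \to B \to \operatorname{coker}\phi \to 0,\]
one obtains $e(B) - r e(A)$ as the leading coefficient of $H_{\operatorname{coker}\phi}(n)$. To identify this with $\sum_{q \in \mathfrak{Q}} e_{A_1 B_q}(B_q)\, e(B/q)$, I would use the short exact sequence $0 \to \phi(A^r) B / \phi(A^r) \to \operatorname{coker}\phi \to B/(b_1, \dots, b_r)B \to 0$. The last term is a $B$-module of dimension at most $1$, hence contributes only lower-order terms. The leading contribution therefore comes from $\phi(A^r)B/\phi(A^r)$, and a local analysis at each $q \in \mathfrak{Q}$, using the minimal-reduction property $\lambda_{B_q}(B_q/(b_1, \dots, b_r)B_q) = e_{A_1 B_q}(B_q)$, identifies it with the stated sum.

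The main obstacle I expect is the simultaneous genericity of the $b_i$'s (forming an $A_p$-basis at every $p \in \mathfrak{P}$ and a minimal reduction at every $q \in \mathfrak{Q}$), together with the local identification of the leading coefficient of the graded $A$-module $\phi(A^r)B/\phi(A^r)$ (which lacks a canonical $B$-module structure) with the local Hilbert--Samuel multiplicities. One clean way to package the last step is via Achilles--Manaresi-type $j$-multiplicity: identify $e(B) - r e(A)$ with $j(A_1 B, B)$ and decompose $j(A_1 B, B)$ over the minimal primes $q \in \mathfrak{Q}$ of $A_1 B$ with maximal quotient dimension, yielding the formula.
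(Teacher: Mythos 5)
The paper does not reprove this statement directly: Theorem 1.1 is quoted from \cite[6.4]{SUV}, and the relevant argument in the present paper is the proof of Theorem 4.1, which generalizes it. That proof works entirely on the level of the associated graded ring $G=\mathrm{gr}_{A_1B}(B)$ and the bigraded ring $T=\mathrm{gr}_m(G)$: it observes $\ell(A_1B)=\dim A=d$, chooses a reduction sequence, applies the Achilles--Manaresi degree-of-cycle decomposition from Theorem 3.1, and then, as the key step, identifies $re(A)$ with $j_d(A_1B)$ using the SUV identity $e(B)=e(G)=re(A)+e_\infty(A,B)$ together with $0:_G\langle B_1G\rangle=0:_G\langle mG\rangle$.

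Your elementary module-theoretic route has a structural gap that I do not think can be patched within the framework you set up. Because $\dim B/A_1B=1>0$, the ring $B$ is \emph{not} a finite $A$-module, so neither $\operatorname{coker}\phi=B/\sum Ab_i$ nor the subobject $\phi(A^r)B/\phi(A^r)$ is a finitely generated $A$-module. The step where you bound the degree of their Hilbert functions by their $A$-support being disjoint from $\mathfrak{P}$ is exactly the step that is only valid for finite $A$-modules; in fact, taken at face value that reasoning would give $e(B)=re(A)$ with no correction term, which is false. Two further points need attention: (i) the reduction to $d=2$ requires cutting by general linear forms in $A_1$, and while this can be arranged to preserve $\dim B/A_1B=1$ and the local Hilbert--Samuel multiplicities, it is not automatic (and is not justified in the proposal) that the hypothesis ``$A_p$ reduced and $\operatorname{rank}_{A_p}B_p=r$ for all $p\in\mathfrak{P}$'' descends to the quotient, since the new top-dimensional primes are not localizations of the old ones; and (ii) the elements $b_1,\dots,b_r$ cannot in general be asked to form a \emph{minimal} reduction of $A_1B_q$ in $B_q$, since minimal reductions there require $\dim B_q=d-1$ generators and there is no reason this should equal $r=\operatorname{rank}_{A_p}B_p$. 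The closing suggestion --- identify $e(B)-re(A)$ with $j(A_1B)$ (equivalently $e_\infty(A,B)$) and decompose it over the minimal primes in $\mathfrak{Q}$ --- is the right idea and is essentially how both SUV and the paper's Theorem 4.1 proceed, but this step is not an afterthought: it requires the nontrivial input $e(B)=e(G)$ and $e(B)=re(A)+e_\infty(A,B)$ from \cite[6.1]{SUV}, which the rest of your argument does not supply.
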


Here $e(A)$ and $e(B)$ are the usual multiplicities of $A$ and
$B$. We use $e_{A_1B_q}(B_q)$ to denote the Hilbert-Samuel
multiplicity of $B_q$ with respect to the ideal $A_1B_q$.

The assumption ${\rm dim}\,B/A_1B=1$ forces  the prime ideals in the
projective spectrum of $B$  containing the ideal $A_1B$ to be
minimal over $A_1B$. Therefore   the ideal $A_1B_q$ is primary to
the maximal ideal $qB_q$, for $q\in \mathfrak{Q}$, and one can use the Hilbert-Samuel
multiplicity $e_{A_1B_q}(B_q)$. But when ${\rm dim}\,B/A_1B>1$,
the sum will involve prime ideals which are not minimal over
$A_1B$ and the Hilbert-Samuel multiplicity is not defined locally
at those prime ideals.
Thus for ${\rm dim}\,B/A_1B>1$, these authors  showed only that
  the left hand side of
  Equation (1) is greater than the
right hand side \cite{SUV}. In this theorem,
 the required local multiplicities along ${\rm Proj}(B)$ are just the Hilbert-Samuel multiplicities.

Later in 2007,  Validashti  improved this inequality when  ${\rm
dim}\,B/A_1B>1$:

\begin{prop} {\rm(\hspace{-.004in}\cite[5.9.4]{V})}
Let
$A\subseteq B$ be a homogeneous inclusion of standard graded
Noetherian rings of the same dimension, with $A_0=B_0$ each Artinian
local and $B$  equidimensional.
Let $\mathfrak{P}=\{p\in {\rm Spec}(A)\,|\,
{\rm dim}\, A/p={\rm dim}\, A\}$ and assume that $A_p$ is reduced
and ${\rm rank}_{A_p} B_p=r$ for every prime ideal $p\in \mathfrak{P}$. Then
\begin{align}
e(B)\geq re(A)+\sum_{q \in \mathfrak{Q}^{\prime}} j
(A_1B_q)\,e(B/q),
\end{align}
where $\mathfrak{Q}^{\prime}=\{q\in V(A_1B)\cap {\rm Proj}(B)
\,|\,\ell (A_1B_q)={\rm
 dim}\,B_q\}$.
 \end{prop}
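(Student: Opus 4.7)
The natural strategy is induction on $s := \dim B/A_1B$, using Theorem 1.1 as the base case. When $s=0$, the ideal $A_1B$ contains a power of the irrelevant ideal $B_+$, which forces $B$ to be finite (hence integral) over $A$; the generic-rank hypothesis then gives $e(B) = re(A)$ via the standard length/multiplicity formula, and $\mathfrak{Q}' = \emptyset$ since every prime containing $A_1B$ also contains $B_+$ and is therefore excluded from ${\rm Proj}(B)$. When $s=1$, every $q \in \mathfrak{Q}'$ is minimal over $A_1B$, so $A_1B_q$ is $qB_q$-primary and $j(A_1B_q) = e_{A_1B_q}(B_q)$; hence (2) is precisely (1) of Theorem 1.1 and equality holds.

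For the inductive step with $s \geq 2$, one first enlarges the residue field of $A_0 = B_0$ to an infinite extension, a standard reduction that preserves all relevant multiplicities. One then selects a sufficiently generic linear form $y \in A_1$. The crucial observation is that every $q \in \mathfrak{Q}'$ automatically contains $y$, since $y \in A_1 \subseteq A_1B \subseteq q$, so $y$ passes to every localization in play. A standard superficial-element argument gives
\[
e(A) = e(A/yA), \qquad e(B) = e(B/yB), \qquad e(B/q) = e\bigl((B/yB)/(q/yB)\bigr),
\]
and the inclusion $A/yA \subseteq B/yB$ inherits the generic-fiber-rank hypothesis with the same rank $r$; moreover $\dim(B/yB)/A_1(B/yB) = s - 1$, setting the stage for the inductive hypothesis.

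The key analytic input is to show that, for generic $y$, the local $j$-multiplicity at each $q$ is controlled after the hyperplane section by an inequality of the form
\[
j(A_1B_q)\,e(B/q) \;\leq\; \sum_{\bar q} j\bigl(A_1(B/yB)_{\bar q}\bigr)\,e\bigl((B/yB)/\bar q\bigr),
\]
where $\bar q$ ranges over the primes of $B/yB$ lying in the fiber over $q$ that belong to the analogue of $\mathfrak{Q}'$ for $B/yB$. Summing over $q \in \mathfrak{Q}'$ and combining with the inductive hypothesis applied to $A/yA \subseteq B/yB$ then chains together to yield the desired inequality (2).

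The main obstacle is twofold. First, $B/yB$ need not be equidimensional, so one must pass to its unmixed (top-dimensional) part before invoking the inductive hypothesis; because (2) is an inequality in the favorable direction, discarding non-top-dimensional components can only decrease the right-hand side and is therefore harmless. Second, the precise behavior of the $j$-multiplicity at non-minimal primes under generic hyperplane sections is delicate; one typically controls it by analyzing the associated graded ring ${\rm gr}_{A_1B}(B)$ together with the Achilles--Manaresi length formula expressing $j(A_1B_q)$ as a sum of lengths of graded torsion components, which behave predictably under cutting by a generic linear form in $A_1$.
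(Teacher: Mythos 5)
The paper does not actually prove this proposition; it is quoted from Validashti's thesis \cite{V} as motivating background. What the paper establishes (Theorem 4.1 and Corollary 4.2) is a strengthening of it to an exact equality, obtained via the Achilles--Manaresi/St\"uckrad--Vogel cutting algorithm applied to the pair $(B,A_1B)$: one picks a reduction sequence $x_1,\dots,x_d$ of general linear combinations of generators of $A_1$, decomposes $e(B)$ into degrees of cycles, and identifies the bottom term with $r\,e(A)$ using the SUV identity $e(B)=r\,e(A)+e_\infty(A,B)$. Your route --- an induction on $\dim B/A_1B$ by simultaneously cutting $A$ and $B$ with one generic $y\in A_1$ --- is a genuinely different organization.

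There are, however, two gaps that I believe are fatal as written. First, you assert that $A/yA\subseteq B/yB$ is again a homogeneous inclusion, but the map $A/yA\to B/yB$ has kernel $(yB\cap A)/yA$, which need not vanish. Replacing $A/yA$ by its image $\widetilde A=A/(yB\cap A)$ does give an inclusion $\widetilde A\subseteq B/yB$, but then, since $\widetilde A$ is a graded quotient of $A/yA$ (of the same dimension, generically), one has $e(\widetilde A)\le e(A/yA)=e(A)$; the inductive hypothesis therefore only yields $e(B)=e(B/yB)\ge r\,e(\widetilde A)+\cdots$, which is weaker than the target $e(B)\ge r\,e(A)+\cdots$. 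The inequality is being chained in the wrong direction at precisely the step you need, and you would have to show $\dim\bigl((yB\cap A)/yA\bigr)<d-1$ for generic $y$, which is neither automatic nor addressed. Second, the inequality you label the ``key analytic input,'' comparing $j(A_1B_q)$ to the $j$-multiplicity after cutting by $y$, is asserted rather than proved; note that $y$ lies in $A_1B_q\subseteq qB_q$ and is \emph{not} a generic element of $B_q$ (generic here means generic over the residue field of $R_0$, not of $B_q$), so superficial-element arguments do not apply verbatim. These two points are exactly where the substance lies, and they are what the paper's graded-cycle decomposition is designed to sidestep by never forming the quotient $A/yA$ at all.
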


Here $\ell(A_1B_q)$ is the analytic spread of the ideal $A_1B_q$ and $j(A_1B_q)$ is its $j$-multiplicity in the sense of Achilles-Manaresi \cite{AM}\cite{AM1}.
Observe the set $\mathfrak{Q}^{\prime}$ is finite (it is contained
 in the set of the centers of
the Rees valuations of $A_1B$). But
 it  may contain prime ideals $q$ which are not
minimal over $A_1B$.   Hence the $j$-multiplicity is used in
Inequality (2) to replace the Hilbert-Samuel multiplicity. Since
$A_1B_q$ has maximal analytic spread, one has $j(A_1B_q)\neq 0$.
 Also if $q$ is minimal over $A_1B$,
 $j(A_1B_q)=e_{A_1B_q}(B_q)$.
 Thus when  ${\rm dim}\,B/A_1B>1$, some terms are
added to the right hand side of Equation (1) to make it closer to
$e(B)$. But Validashti \cite{V} also
gave an example to show that  Inequality (2) can be strict. In
Theorem 4.1, we give the extra terms on the right hand side of
Inequality (2) required to yield an equality for arbitrary
dimensions of $B/A_1B$. This solves the problem when
${\rm dim}\,A={\rm dim}\, B$. In Theorem 4.9, we also provide a formula for the case
when ${\rm dim}\,A<{\rm dim}\,B$. Thus we give a complete answer to the original question.

As  mentioned before, these formulas can be used to find the
multiplicity of the special fiber ring $k\otimes_R \mathscr{R}
(I)$, where $I\subseteq R$ is an ideal generated by forms of the
same degree in a standard graded $k$-algebra $R$.  This yields an
upper bound for the reduction number of $I$ with respect to any
reduction. It also provides a formula for the degree of the image
of the rational map induced by  $I$. In particular, it gives the
degree of dual varieties of hypersurfaces. The first formula
relating the degree of the dual variety to the degree of the
variety itself was given by Pl\"{u}cker  in 1834 for complex plane
curves
with at most nodes and cusps as singularities \cite[p.857]{Kline}.
Later, in 1975 Teissier generalized Pl\"{u}cker's formula to
hypersurfaces with at most isolated singularities
\cite[App.II]{T}. In 1994, Kleiman
 generalized Teissier's Pl\"{u}cker formula to  projective
 varieties with at most isolated singularities \cite[Theorem 2]{K}.
  In 1997, based on Kleiman's work,  Thorup generalized the Pl\"{u}cker formula to projective
 varieties with arbitrary singularities  using the Chow groups of
 the varieties
 \cite{TH}. Both of their formulas assume the variety has
 non-deficient dual (i.e., the dual variety is a hypersurface).
 In that case the degree of the dual variety is called the {\it
 class} of the variety.
   Our formulas can be
used to find the degree of the dual variety for any hypersurface
without any restriction on its singularities and dual variety (i.e., we do not need the dual variety to be a hypersurface).
In particular, it gives a generalization of Teissier's Pl\"{u}cker
formula to hypersurfaces with arbitrary singularities. See Section 5 for these applications.

This paper is divided into five parts. In Section 2,  first we fix notation which will be used
throughout the paper. After that we define the $j$-multiplicity for ideals
that have possibly non-maximal analytic spread and prove some
facts about the $j$-multiplicity and (super-) reduction
sequences. In Section 3, we express the multiplicity of a standard
graded Noetherian algebra
by means of local $j$-multiplicities
of various hyperplane sections. This formula becomes simpler  if the ideal $I$ satisfies  condition $G_{t+1}$ and Artin-Nagata property $AN_{t-1}^-$, or if $I$ is   a
complete intersection for every $q\in V(I)$ with ${\rm ht}\,q\leq t$,  or is perfect  of
height two satisfying condition $G_{t+1}$, or is Gorenstein  of height three satisfying condition $G_{t+1}$ (see Corollaries 3.3 and 3.5).  In Section 4, we
consider   a homogeneous inclusion $A\subseteq B$ of standard
graded Noetherian rings over an Artinian local ring and express
the multiplicity of $A$ in terms of that of $B$ and of local
$j$-multiplicities of hyperplane sections along ${\rm Proj}(B)$;
this is done by applying the formulas obtained in Section 3. Finally
in Section 5, we obtain the formulas for the multiplicity of
special fiber rings and give some applications.

\section{ Preliminaries.}
\noindent

In this section, first we  fix notation and recall some basic
concepts and results which will be used throughout the paper. Then
we define the $j$-multiplicity for ideals that have possibly
non-maximal analytic spread and prove some facts about the
$j$-multiplicity and (super-) reduction sequences.

Throughout the paper, let $(R, m, k)$ be either a Noetherian local
ring or a standard graded Noetherian algebra over an Artinian
local ring $(R_0,m_0)$, where  $m$ is either the maximal ideal or
the  homogeneous maximal ideal $(m_0,R_1R) $ of $R$ and $k=R/m$ is
the residue field. Let $I$  be an ideal of $R$. Write
$G=$ ${\rm gr}_{I}(R)=\oplus_{j=0}^{\infty} I^j/I^{j+1}$ for the
{\it associated graded ring} of $R$ with respect to the ideal $I$
and $F=G/mG$ for the {\it special fiber ring} of this ideal. The
{\it analytic spread} of $I$ is defined by $\ell(I)={\rm dim}\,F$
and one has ${\rm ht}\,I\leq \ell(I)\leq {\rm dim}\,R={\rm
dim}\,G$, where ${\rm ht}\,I$ denotes the height of $I$.

If $I$ is an $m$-primary ideal of $R$,  we write $e_I(R)$  for the
{\it Hilbert-Samuel multiplicity} of $R$ with respect to $I$ and
$e(R)$ for $e_m(R)$. If $I$ is not necessarily $m$-primary but has
maximal analytic spread, i.e., $\ell (I)={\rm dim}\, R$,  one
writes $j(I)$ for the $j$-{\it multiplicity} of $R$ with respect
to $I$ (see \cite{AM}, and Definition 2.1 and subsequent remarks below). Notice that if $I$ is $m$-primary then $j(I)=e_I(R)$ \cite{AM}.

For an ideal $I$ and a submodule $N$ of an $R$-module $M$,
$N:_M\langle I\rangle=\cup_{i\geq 0}(N:_MI^i)=\{x\in
M\,|\,I^ix\subseteq N \,{\rm for\, some} \,\,i\in \mathbb{N}\}$.
An element $a\in I$ is said to be {\it a filter-regular element }
with respect to $I$ if  $0:_R a\subseteq 0:_R \langle I \rangle$.
This is equivalent to saying that $a$ is not in any $q\in {\rm Ass}(R)$
such that $I\nsubseteq q$. A sequence of elements $a_1, \ldots,
a_t$ of $I$ is called {\it a filter-regular sequence} for $R$ with
respect to $I$ if $(a_1, \ldots, a_{i-1})R:_Ra_i\subseteq (a_1,
\ldots, a_{i-1})R:_R\langle I\rangle$ for $1\leq i \leq t$. Assume that
$\ell(I)=s$. Then a sequence of elements $a_1,\ldots,a_s$ of $I$
is called  {\it a reduction sequence} for $I$ if the initial forms
$a_1^*, \ldots, a_s^*$ of $a_1, \ldots, a_s$ in $G$
 are of degree one and form a {\it filter-regular sequence} for
$G$ with respect to $G_+$, where  $G_+$ is the ideal generated by
all homogeneous  elements of positive degree in $G$, and a system
of parameters for $F$. If $s={\rm dim }\,R=d$, a reduction
sequence $a_1, \ldots, a_d$ for $I$ will be called a {\it
super-reduction} for $I$, if for every relevant
highest-dimensional prime ideal $P$ of $G$,  the initial forms $a_1^*, \ldots, a_{d(P)}^*$ are a
system of parameters for $G/(mG+P)$, where $d(P)={\rm dim
}\,G/(mG+P)$. Suppose $a_1, \ldots, a_d$
form a super-reduction for $I$ and $J_{d-1}=(a_1, \ldots,
a_{d-1})R:_R \langle I\rangle$; then $j(I)=\lambda_R
(R/(J_{d-1}+a_dR))$, where $\lambda_R (R/(J_{d-1}+a_dR))$ denotes
the length of $R/(J_{d-1}+a_dR)$ \cite{AM}. It is well-known that if $R$ has
infinite residue field, every ideal has reduction sequences or, if
the ideal has maximal analytic spread, super-reductions. For
concepts and results about analytic spread, ($j$-)multiplicities and
(super-) reduction sequences, see \cite{AM}, \cite{AM1},
\cite{AM2}, \cite{FM}, 
\cite{SA},
\cite{SE} and \cite{V}.

Now we  will define the $j$-multiplicity for ideals that have
possibly non-maximal analytic spread.

\begin{defi}
\em Let $(R,m,k)$ be either a Noetherian local
ring or a standard graded Noetherian algebra over an Artinian
local ring $(R_0,m_0)$, where  $m$ is either the maximal ideal or
the  homogeneous maximal ideal $(m_0,R_1R) $ of $R$ and $k=R/m$ is
the residue field. Let $I$ be an
$R$-ideal with analytic spread $\ell (I)=s$. Set $G={\rm gr}_I(R)$
and $\Gamma_m(G)=0:_G \langle m\rangle=\oplus_{j=0}^{\infty}
\Gamma_m\,(I^j/I^{j+1})$. The module $\Gamma_m(G)$ is  finitely
generated and  graded over a standard graded Noetherian algebra
over the Artinian local ring $R/m^{t}$ for some integer $t>0$.
Furthermore,  ${\rm dim}\,\Gamma_m(G)\leq s$. So $\Gamma_m(G)$ has
a Hilbert function that is eventually a polynomial of degree at
most $s-1$. Define
$$
j_s(I)=(s-1)! {\it \mathop {\lim }\limits_{i \to \infty
}}\frac{\lambda(\Gamma_m(I^i/I^{i+1}))}{i^{s-1}}.
$$
\end{defi}

\bigskip

Notice that $j_s(I)=e(\Gamma_m(G))$ if
${\rm dim }\,\Gamma_m(G)= s$ and zero otherwise. When $s={\rm
dim}\,R$, $j_s(I)$ is the usual $j$-multiplicity $j(I)$. In particular if $I$ is $m$-primary, $j_s(I)=j(I)=e_I(R)$, the Hilbert-Samuel multiplicity.

The following lemma shows that the $j_s$-multiplicity does not change modulo the ideal $0:_R \langle I\rangle$.

\begin{lemm}
 Let $(R,m,k)$  be as in Definition 2.1 with
 $k$ infinite and  $I$  an ideal of $R$. Set $J=0:_R
\langle I\rangle$ and assume
$\bar{R}=R/J\neq 0$. Write $\bar{I}=I\bar{R}$ and
$s=\ell(\bar{I})$. Then $s\geq 1$ and the following hold:
\begin{enumerate}
 \item [$($a$)$] $\ell(I)=\ell(\bar{I})=s$.
\item [$($b$)$] $j_{s}(I)=j_{s}(\bar{I})$.
\end{enumerate}
\end{lemm}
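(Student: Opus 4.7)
Write $G={\rm gr}_I(R)$, $\bar G={\rm gr}_{\bar I}(\bar R)$, $F=G/mG$, $\bar F=\bar G/m\bar G$, and let $K$ denote the kernel of the natural surjection $G\twoheadrightarrow\bar G$. The heart of the proof is the claim that $K_n=0$ for all $n$ sufficiently large; both parts of the lemma then follow quickly. For part (a) one transfers the vanishing of $K$ to the kernel of $F\twoheadrightarrow\bar F$, and for part (b) one observes that $\Gamma_m(I^n/I^{n+1})$ and $\Gamma_m(\bar I^n/\bar I^{n+1})$ are naturally isomorphic for $n\gg 0$.

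\textbf{Key step (bounded kernel).} Using the modular law, identify $K_n\cong(I^n\cap J)/(I^{n+1}\cap J)$. For $x\in I^n\cap J$, the definition $J=0:_R\langle I\rangle$ supplies some $s$ with $I^sx=0$, and passing to initial forms yields $G_+^s\cdot x^*=0$ in $G$. Noetherianity of $G$ forces $K$ to be finitely generated, so a uniform $t$ exists with $G_+^tK=0$. Since $G$ is standard graded we have $G_+^t=\bigoplus_{n\geq t}G_n$, which forces $K_n=0$ for every $n\geq D$, where $D$ is $t$ plus the top generating degree of $K$. Equivalently, the natural surjection $I^n/I^{n+1}\twoheadrightarrow\bar I^n/\bar I^{n+1}$ is an isomorphism for all $n\geq D$.

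\textbf{Part (a).} Suppose $s=\ell(\bar I)=0$: then $\bar F$ is $0$-dimensional, so $\bar F_n=0$ for $n\gg 0$, and Nakayama (applied first to $\bar I^n/\bar I^{n+1}$ over $(\bar R,\bar m)$ and then to $\bar I^n$) gives $\bar I$ nilpotent, i.e.\ $I^N\subseteq J$; finite generation of $I^N$ combined with the definition of $J$ then forces $I$ nilpotent in $R$, whence $1\in J$ and $\bar R=0$, a contradiction. So $s\geq 1$. For the equality of analytic spreads, quotient by $m$ to obtain $0\to\bar K\to F\to\bar F\to 0$ with $\bar K=(K+mG)/mG$ and $\bar K_n=0$ for $n\geq D$. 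Each $F_n$ is finite-dimensional over the field $k$ (as $F$ is Noetherian and standard graded over $k$), so $\bar K$ has finite length over $F$; in particular $\dim\bar K=0$. Additivity of Krull dimension on the short exact sequence then gives $\dim F=\max(\dim\bar K,\dim\bar F)=\dim\bar F=s$.

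\textbf{Part (b) and main obstacle.} For $n\geq D$, the isomorphism $I^n/I^{n+1}\cong\bar I^n/\bar I^{n+1}$ from the key step induces an isomorphism $\Gamma_m(I^n/I^{n+1})\cong\Gamma_m(\bar I^n/\bar I^{n+1})$ of $R$-modules; since the latter is annihilated by $J$, its $R$-length equals its $\bar R$-length. The eventual Hilbert polynomials featuring in the definitions of $j_s(I)$ and $j_s(\bar I)$ therefore coincide, forcing equality of their leading coefficients and hence $j_s(I)=j_s(\bar I)$. The only real obstacle is the key step: the pointwise assertion that each homogeneous element of $K$ is $G_+$-torsion is immediate from the definition of $J$, but upgrading this to a uniform annihilator $G_+^tK=0$ and finally to degree-wise vanishing of $K$ is precisely where Noetherianity and the standard graded structure of $G$ are essential. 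Everything else is bookkeeping.
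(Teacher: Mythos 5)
Your proof is correct but takes a genuinely different route from the paper's. For part (a), the paper uses the $k$-infinite hypothesis to produce a minimal reduction of $\bar I$ generated by $s$ elements, lifts it to an ideal $H\subseteq I$, and then invokes the Artin--Rees lemma (with $J=0:_R I^t$) to get $I^j\cap J=0$ for $j\gg 0$; this shows $H$ is also a reduction of $I$, whence $\ell(I)\le s$. Your argument reaches the same eventual conclusion, that the surjections $I^n/I^{n+1}\twoheadrightarrow \bar I^n/\bar I^{n+1}$ are isomorphisms for $n\gg 0$, by a purely graded-module argument inside $G={\rm gr}_I(R)$: the kernel $K$ is $G_+$-torsion degree by degree by the very definition of $J$, and a finitely generated $G_+$-torsion graded module over a standard graded Noetherian ring vanishes in high degrees. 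You then read the analytic-spread equality off the induced short exact sequence of special fiber rings, using that its kernel has finite length. The paper's route is shorter given Artin--Rees and the theory of minimal reductions; your route is more self-contained and, notably, does not actually use the hypothesis that $k$ is infinite (which the paper needs to manufacture the $s$-generated reduction). Part (b) is essentially the same in both: once the graded pieces of $G$ and $\bar G$ agree in high degrees, the modules $\Gamma_m(I^n/I^{n+1})$ and $\Gamma_{\bar m}(\bar I^n/\bar I^{n+1})$ have the same lengths for $n\gg 0$, and the normalized leading coefficients coincide. One small presentational point: you use the letter $s$ both for $\ell(\bar I)$ and for the exponent in $I^s x=0$; this collision is harmless but worth fixing.
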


\begin{proof}
(a)  If $s=0$, then $I$ is in the nilradical of $R$ and $\bar{R}=R/J= 0$.
Hence it is easy to see that $s\geq 1$ and $\ell(I)\geq s$. On the
other hand, since $k$ is infinite, there exists an ideal $H\subseteq I$ generated by $s$
elements with $I^{j}\subseteq HI^{j-1}+J$ for $j>>0$. So
$I^{j}\subseteq HI^{j-1}+J \cap I^{j} $ for sufficiently large $j$. Let $t$ be
 an integer such that $J=0:_R I^t$. By the Artin-Rees Lemma,
there exists $c\geq 0$ so that  for all $j\geq c+t$, $I^{j}\cap
J=I^{j-c}(I^{c}\cap J)\subseteq I^{t}J=0$. So $I^{j}\subseteq
HI^{j-1}$ for $j>>0$, i.e., $H$ is a reduction of $I$. Since $H$
is generated by $s$ elements, it follows that $\ell(I)\leq s$.

(b) Let $G=\mbox{gr}_I(R)=\oplus_{j=0}^{\infty}I^j/I^{j+1}$,
$\bar{G}=\mbox{gr}_{\bar{I}}(\bar{R})=\oplus_{j=0}^{\infty}I^j/(I^j\cap
J+I^{j+1})$.
 By the proof of part(a), $I^j\cap J=0$ when $j>>0$. Set $\bar{m}=m\bar{R}$, one has
$\Gamma_m(I^j/I^{j+1})=\Gamma_{\bar{m}}( I^j/(I^j\cap J+I^{j+1}))$
for $j>>0$. Thus one has ${\rm dim}\,\Gamma_m (G)={\rm
dim}\,\Gamma_{\bar{m}} (\bar{G})$. Furthermore
$j_{s}(I)=e(\Gamma_m(G))=e(\Gamma_{\bar{m}}(\bar{G}))=j_s(\bar{I})$
if dim $\Gamma_m(G)=s$, and $j_s(I)=0=j_s(\bar{I})$ otherwise.
\end{proof}

\bigskip

Next we want to prove a result about (super-)reduction sequences.
Before doing that, let us recall the bigraded ring $T={\rm
gr}_m({\rm gr}_I(R))$ from \cite{AM1}. Observe
$T=\oplus_{i,j=0}^{\infty} T_{ij}$, where
$T_{ij}=(m^iI^j+I^{j+1})/(m^{i+1}I^j+I^{j+1})$ and $T_{00}=R/m=k$.
First we give a fact about the bigraded ring $T$.

\begin{prop}
 Let $(R,m,k)$ be as in Definition 2.1 and assume that $k$ is infinite.
   Let $I=(a_1,\ldots,a_n)R$ be an ideal
 with
$\ell(I)=s$. Write $x_i=\sum_{j=1}^n \lambda_{ij}a_j$ for
$1\leq i\leq s$ and $\Lambda=(\lambda_{ij})\in R^{sn}$. Then there
is a dense open subset $U$ of $k^{sn}$ such that if the image
$\overline{\Lambda}=(\overline{\lambda_{ij}})\in U$,  the
images $x_1^o,\ldots,x_s^o$ of $x_1,\ldots,x_s$ in $T_{01}=I/mI$
are a filter-regular sequence for $T$ with respect to the ideal
$T_{01}T$ and a system of parameters for
$F=G/mG=\oplus_{j=0}^{\infty}T_{0j}$.
\end{prop}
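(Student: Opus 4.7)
The plan is to apply generic prime avoidance over the infinite residue field $k$. The key linear-algebraic observation is that because $a_1,\dots,a_n$ generate $I$, the $k$-linear map
\[
\varphi\colon k^n \longrightarrow T_{01}=I/mI,\qquad (\lambda_1,\dots,\lambda_n)\longmapsto \Bigl(\sum_{j=1}^n \lambda_j a_j\Bigr)^o,
\]
is surjective; hence the preimage of any proper $k$-subspace of $T_{01}$ is a proper $k$-subspace of $k^n$, and over the infinite field $k$ the complement of any finite union of such subspaces is Zariski dense and open.

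First I would record that $T$ is a standard bigraded Noetherian $k$-algebra generated in bidegrees $(1,0)$ and $(0,1)$ over $T_{00}=k$, so every finitely generated bigraded $T$-module has only finitely many associated primes, all bigraded. Arguing by induction on $i$, suppose $x_1^o,\dots,x_{i-1}^o$ are already filter-regular for $T$ with respect to $T_{01}T$. Then $x_i^o$ must avoid every associated prime $P$ of $T/(x_1^o,\dots,x_{i-1}^o)T$ with $T_{01}T\not\subseteq P$, equivalently with $T_{01}\not\subseteq P$; each such $P$ meets $T_{01}$ in a proper $k$-subspace, and there are only finitely many. Applying the same pattern to $F=\bigoplus_{j\ge 0} T_{0j}$, a standard graded $k$-algebra of dimension $s$ with $F_1=T_{01}$, the system-of-parameters condition at step $i$ requires $x_i^o$ to avoid the finitely many primes $Q$ of $F/(x_1^o,\dots,x_{i-1}^o)F$ with $\dim F/Q=s-i+1$; since $s-i+1\ge 1$, each such $Q$ meets $F_1$ in a proper $k$-subspace of $T_{01}$. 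Pulling both families of conditions back through $\varphi$, the admissible values for the $i$-th row of $\Lambda$ form the complement of a finite union of proper $k$-subspaces of $k^n$, a dense Zariski-open subset of $k^n$ depending only on the previously chosen rows.

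The main delicacy is to verify that the combined locus $U\subseteq k^{sn}$ is genuinely Zariski-open in $k^{sn}$ and not merely nonempty. I would handle this by working with a universal $\Lambda$ whose entries are indeterminates over $k[\lambda_{ij}]$, verifying the filter-regular and system-of-parameters conditions at the generic point (where the relevant bad associated primes are controlled uniformly), and then invoking upper semicontinuity of associated primes in the resulting flat family to descend to a Zariski-open subset of specializations; nonemptiness over the infinite field $k$ then upgrades openness to density. Beyond this set-theoretic bookkeeping, the argument reduces to elementary prime avoidance together with Noetherianity of $T$ guaranteeing finiteness of associated primes at each inductive step.
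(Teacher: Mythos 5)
Your inductive prime-avoidance argument correctly produces the dense open condition on row $i$ \emph{given} rows $1,\dots,i-1$, which is exactly the sequential choice already obtained by Achilles and Manaresi and discussed in Remark 2.6. As you yourself note, the substance of the proposition is precisely the upgrade to a single dense open $U\subseteq k^{sn}$ for all $s$ rows at once, and this is where your argument has a real gap: the phrase ``upper semicontinuity of associated primes in the resulting flat family'' is not a theorem you can invoke. Associated primes do not vary semicontinuously under specialization in any form strong enough to conclude filter-regularity on a Zariski-open set of fibers; primes can both appear and disappear, and filter-regularity is a condition on a whole sequence, not just a single $\mathop{\rm Ass}$ set. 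Moreover the family $T[Z]$ over $k[Z]$ is not known a priori to have flat quotients by $(x_1'^o,\dots,x_s'^o)$, so speaking of ``the resulting flat family'' presupposes part of what must be established.

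The paper's proof closes this gap by replacing the $\mathop{\rm Ass}$-theoretic formulation with a homological one: filter-regularity of $x_1^o,\dots,x_s^o$ with respect to $T_{01}T$ is equivalent to acyclicity of the Koszul complex $K.(x_1^o,\dots,x_s^o)$ at every prime of $\mathop{\rm Spec}(T)\setminus V(T_{01}T)$. Acyclicity of a fixed finite complex is amenable to base change: one verifies it for the universal complex over $T'=T[Z]$, applies the Generic Flatness Lemma to get $0\neq f\in k[Z]$ with $T'_f$ and $H_0(x_1'^o,\dots,x_s'^o)_f$ free over $k[Z]_f$, and then deduces acyclicity of the specialized Koszul complex from the vanishing of $\mathop{\rm Tor}^{k[Z]_f}_i$ against $k[Z]_f/(Z-\overline{\Lambda})$ for $\overline{\Lambda}$ in the principal open set $U_1=k^{sn}\setminus V(f)$. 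The system-of-parameters condition is handled separately by citing Northcott--Rees, which already yields a dense open $U_2\subseteq k^{sn}$ directly. Without the Koszul/Generic-Flatness mechanism (or an equivalent replacement), your argument proves only the weaker sequential statement, so it does not establish the proposition as stated.
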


\begin{proof}
Observe that if $s=0$, the result is obvious. So we may assume $s>0$. First we show there exists a dense open subset $U_1$ of $k^{sn}$
such that if the image $\overline{\Lambda}\in U_1$, then
$x_1^o,\ldots,x_s^o$  form a filter-regular sequence  with respect
to the ideal $T_{01}T$.
 To do this, let $Z=(z_{ij})$ be variables over $R$,
$1\leq i \leq s$, $1\leq j \leq n$, $R^{\prime}=R[Z]$ and
$T^{\prime}={\rm gr}_{mR^{\prime}}({\rm
gr}_{IR^{\prime}}(R^{\prime}))=T[Z]$. Write the generic linear combinations $
x_i^{\prime}=\sum_{j=1}^n z_{ij} a_j,\, 1\leq i\leq s $.  The images $x_1^{\prime
o},\ldots,x_s^{\prime o}$  in
$T^{\prime}_{01}=IR^{\prime}/mIR^{\prime}$ are of degree one (set
the degrees of $z_{ij}$ to be zero) and form a filter-regular
sequence with respect to $T^{\prime}_{01}T^{\prime}$, i.e.,  a
weakly regular sequence locally at every prime ideal of
Spec$(T^{\prime})\setminus V(T^{\prime}_{01}T^{\prime})$. So the
Koszul complex $K.(x_1^{\prime o},\ldots,x_s^{\prime o})$ in
$T^{\prime}$ is acyclic locally on ${\rm Spec}(T^{\prime})\setminus
V(T^{\prime}_{01}T^{\prime})$. Notice
$Z-\overline{\Lambda}=(z_{ij}-\overline{\lambda_{ij}})$ is an ideal in $k[Z]$ and
$K.(x_1^{\prime o},\ldots,x_s^{\prime
o})\otimes_{k[Z]}k[Z]/(Z-\overline{\Lambda})=K.(x_1^{ o},\ldots,x_s^{ o})$ is
a Koszul complex in  $T$. We only need to show that, by avoiding
 a proper closed subset of $k^{sn}$, $K.(x_1^{ o},\ldots,x_s^{ o})_Q$ is acyclic whenever $Q\in {\rm Spec}(T)\backslash V(T_{01}T)$. Observe that $T^{\prime}$ is a
finitely generated algebra over the Noetherian domain $k[Z]$ and
the $0$-th Koszul homology $H_0(x_1^{\prime o},\ldots,x_s^{\prime o})
=T^{\prime}/(x_1^{\prime o},\ldots,x_s^{\prime
o})T^{\prime}$ is
a  finite $T^{\prime}$-module. By the Generic Flatness Lemma,
there exists an element $0\neq f\in k[Z]$ such that
$T_f^{\prime}$ and $H_0(x_1^{\prime o},\ldots,x_s^{\prime o})_f$ are all free over
$k[Z]_f$.  Set $U_1=k^{s n}\backslash V(f)$.  We claim that $U_1$ is the desired dense open
subset.
Indeed, let $\Lambda\in R^{sn}$ with $\overline{\Lambda}\in
U_1$; then $K.(x_1^{\prime o},\ldots,x_s^{\prime
o})_f\otimes_{k[Z]_f}k[Z]_f/(Z-\overline{\Lambda})\simeq K.(x_1^{ o},\ldots,x_s^{ o})$.
For every prime ideal $Q\in {\rm Spec}(T)\backslash V(T_{01}T)$,  $T^{\prime}_Q/(x_1^{\prime o},\ldots,x_s^{\prime
o})T^{\prime}_Q=(T^{\prime}_f/(x_1^{\prime o},\ldots,x_s^{\prime
o})T^{\prime}_f)_{\bar{Q}}$  is flat over $k[Z]_f$. Thus ${\rm
Tor}_i^{k[Z]_f}(T^{\prime}_Q/(x_1^{\prime o},\ldots,x_s^{\prime
o})T^{\prime}_Q, k[Z]_f/(Z-\overline{\Lambda}))=0$ for $1\leq i\leq s$, i.e.,
$K.(x_1^{ o},\ldots,x_s^{ o})_Q$ is acyclic.

Secondly, by \cite{NR} there exists another dense open subset $U_2$
of $k^{sn}$ such that if the image $\overline{\Lambda}\in
U_2$, then
$x_1,\ldots,x_s$ generate a minimal reduction of $I$. Therefore
$x_1^o,\ldots,x_s^o$ form a system of parameters for the special
fiber ring $F=G/mG$. Now let $U=U_1\cap U_2$.
 Then $U$ is a dense open subset of $k^{sn}$ and has the required
 property.
\end{proof}

\begin{rema}
\em In the paper, by abuse of notation we will call $\Lambda=(\lambda_{ij})\in R^{sn}$
as in Proposition 2.3  general elements in $R^{sn}$.
\end{rema}

As a result of Proposition 2.3, we have the following corollary.

\begin{coro}
 Let $(R,m,k)$ be as in Definition 2.1 and assume that $k$ is  infinite.
   Let $I=(a_1,\ldots,a_n)R$ be an ideal
 with
$\ell(I)=s$. Then for general elements $\Lambda=(\lambda_{ij})\in
R^{sn}$, $x_1,\ldots,x_s$ form a reduction sequence for $I$ or, if
$s={\rm dim} \,R$, a super-reduction for $I$.
\end{coro}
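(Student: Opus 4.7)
The plan is to read off both conclusions from Proposition 2.3 by upgrading its bigraded statements about $T$ and $F$ to statements about $G$ and, in the super-reduction case, about certain quotients of $G$. Proposition 2.3 produces a dense open subset $U \subseteq k^{sn}$ such that for $\overline{\Lambda} \in U$ the images $x_1^o,\ldots,x_s^o \in T_{01}$ form a filter-regular sequence on $T$ with respect to $T_{01}T$ and a system of parameters for $F = G/mG$. Under the identification $T_{01} = F_1 = G_1/mG_1$, the image of the initial form $x_i^* \in G_1$ coincides with $x_i^o$, so the system-of-parameters requirement in the definition of a reduction sequence is inherited directly.

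It remains to lift filter-regularity from $T$ to $G$. The key observation is that $T$ is generated over $k$ by $T_{10}$ and $T_{01}$, so $T_{01}T = \bigoplus_{i\geq 0,\,j\geq 1} T_{ij}$, and this ideal is precisely the initial ideal of $G_+$ with respect to the $m$-adic filtration on $G$. Under this correspondence, filter-regularity of $x_1^o,\ldots,x_s^o$ on $T$ with respect to $T_{01}T$ transfers to filter-regularity of $x_1^*,\ldots,x_s^*$ on $G$ with respect to $G_+$: given a homogeneous relation $r\,x_i^* \in (x_1^*,\ldots,x_{i-1}^*)G$, one passes to $m$-adic leading forms to obtain $r^o x_i^o \in (x_1^o,\ldots,x_{i-1}^o)T$, applies the filter-regularity there to get $(T_{01}T)^N r^o \subseteq (x_1^o,\ldots,x_{i-1}^o)T$ for some $N$, and lifts back through the filtration to conclude $G_+^N r \subseteq (x_1^*,\ldots,x_{i-1}^*)G$, as required.

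For the super-reduction conclusion when $s = \dim R$, one further needs that for every relevant highest-dimensional prime $P$ of $G$, the initial forms $x_1^*,\ldots,x_{d(P)}^*$ form a system of parameters for $G/(mG+P)$. Since $G$ is Noetherian, the set of such primes is finite, and each quotient $G/(mG+P)$ is a standard graded algebra of dimension $d(P)$ over the infinite field $k$. Running the \cite{NR} argument used in the second half of the proof of Proposition 2.3 inside $G/(mG+P)$ produces, for each such $P$, a dense open $U_P \subseteq k^{sn}$ on which the desired condition holds. Intersecting $U$ with the finitely many $U_P$ yields the required dense open subset.

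The main obstacle is the lifting step in the second paragraph: the $m$-adic leading form of a sum in $G$ need not equal the sum of leading forms in $T$, so one has to argue carefully, for instance by induction on the $m$-adic order of a hypothetical counterexample $r$, combined with the associated-prime reformulation of filter-regularity on $T$.
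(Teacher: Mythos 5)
Your overall structure matches the paper's proof exactly: start from Proposition 2.3, transfer filter-regularity on $T$ with respect to $T_{01}T$ to filter-regularity on $G$ with respect to $G_+$, note that the system-of-parameters condition on $F$ is verbatim what a reduction sequence needs, and for the super-reduction case intersect with finitely many more dense open sets (one per relevant top-dimensional prime $P$ of $G$, using general linear forms in the standard graded $k$-algebras $G/(mG+P)$).

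However, the transfer step in your second paragraph is not a proof -- and you flag this yourself at the end. The sketch "pass to $m$-adic leading forms, apply filter-regularity in $T$, lift back" breaks down precisely at "pass to leading forms": if $r\,x_i^*=\sum_{j<i}c_j x_j^*$ in $G$, the leading forms of the summands can cancel, so one does not simply obtain $r^o x_i^o\in(x_1^o,\ldots,x_{i-1}^o)T$. Your proposed remedy (induction on $m$-adic order together with the associated-prime reformulation) is the right direction but is left as a to-do, not carried out. This is exactly the point where the paper does not argue from scratch: it invokes \cite{AM1}, where the statement "filter-regularity of $x_1^o,\ldots,x_s^o$ in $T={\rm gr}_m({\rm gr}_I(R))$ with respect to $T_{01}T$ implies filter-regularity of the initial forms $x_1^*,\ldots,x_s^*$ in $G={\rm gr}_I(R)$ with respect to $G_+$" is established. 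So your proposal is correct in outline but has a genuine gap at the one nontrivial step; to close it you should either fill in the induction carefully or, as the paper does, cite Achilles--Manaresi \cite{AM1} for the transfer of filter-regularity from $T$ to $G$.
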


\begin{proof}
 Notice that if
the images $x_1^o,\ldots,x_s^o$ in $T_{01}=I/mI$ are a
filter-regular sequence with respect to the ideal $T_{01}T$,
then the initial forms $x_1^*,\ldots,x_s^*$ in the associated graded ring
$G={\rm gr}_I(R)$ form a filter-regular sequence of degree one
with respect to $G_+$ \cite{AM1}. Now the first part follows from Proposition 2.3.  For the second part, when $s={\rm
dim}\,R=d$, we can just avoid finitely many more proper closed
subsets of $k^{dn}$ to assume that $x_1,\ldots,x_d$ form a
super-reduction for the ideal $I$.
\end{proof}

\begin{rema}
\em Corollary 2.5 was first stated by Achilles and Manaresi
\cite[2.9]{AM}. But their proof  shows only that one can choose
general elements $(\lambda_{ij})$ sequentially. Our proof shows
that indeed one can choose the general elements all at once.
\end{rema}

\section{ Formulas for a graded algebra over an Artinian local ring.}
\noindent

In this section, we will give the multiplicity formula
for a standard graded Noetherian algebra $R$ over an Artinian local
ring. This formula expresses the multiplicity of $R$ in terms of data associated to prime ideals in $V(I)$,
for $I\subseteq R$ any given ideal generated by linear forms.

\begin{theo}
Let $R=R_0[R_{1}]$ be a standard graded Noetherian ring of dimension $d$ with
$(R_0, m_0)$ an Artinian local ring. Assume
$|R_0/m_0|=\infty$. Let $I=(a_1,\ldots,a_n)R$ be an ideal
generated by homogeneous elements $a_1,\ldots,a_n$ of degree one.
Write ${\rm ht}\,I=g$ and $\ell(I)=s$. For general elements
$\Lambda=(\lambda_{ij})\in R^{s n}$, let $ x_i=\sum_{j=1}^n
\lambda_{ij}a_j, $  $ J_{i-1}=(x_1,\ldots,x_{i-1})R :_R \langle
I\rangle$,
$\mathfrak{Q}_0=\{q\in {\rm Min}(I)\,|\,{\rm
dim}\,R/q=d\}$
and  $\mathfrak{Q}_i=\{q\in {\rm
Min}(J_{i-1}+I)\,|\,{\rm dim}\,R/q=d-i\}$, for $1\leq i \leq s$.
Then $\ell (I_q/(x_1,\ldots,x_{i-1})R_{q})=1$ for every  $q \in
\mathfrak{Q}_i$ with $1\leq i\leq s$, and
\begin{align}
 e(R)=\sum_{q\in \mathfrak{Q}_0}e_{I_q}(R_q)\,e(R/q)+
 \sum_{i={\rm max}\{1,\,g \}}^{s}\sum_{ q\in \mathfrak{Q}_i
 }j_{1}(\frac{I_{q}}{(x_1,\ldots,x_{i-1})R_{q}})\,e(R/q).
\end{align}
\end{theo}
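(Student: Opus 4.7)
The plan is to prove the formula by induction on the analytic spread $s = \ell(I)$. The two principal reduction moves are: (i) passing from $R$ to $R' = R/J_0$ where $J_0 = 0:_R\langle I\rangle$, which by Lemma~2.2 preserves the analytic spread and the relevant $j$-multiplicities; and (ii) cutting $R'$ by the image $\bar x_1$ of a general linear form $x_1 \in I$, which drops both the dimension and the analytic spread by one without changing the multiplicity. Throughout, the generic choice of $\Lambda = (\lambda_{ij}) \in R^{sn}$ is controlled by Proposition~2.3 and Corollary~2.5, which allow one to intersect finitely many Zariski dense open conditions in $k^{sn}$ in a single step.

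The base case $s=0$ follows from the classical associativity formula. Since $\dim F = 0$, graded Nakayama forces $I$ to be nilpotent, so $\mathrm{Min}(I) = \mathrm{Min}(R)$; then $\mathfrak{Q}_0$ coincides with the indexing set of the sum $e(R) = \sum_P \lambda(R_P)\, e(R/P)$ over top-dimensional minimal primes, and the identification $\lambda(R_P) = e_{I_P}(R_P)$ holds because $I_P^n = 0$ for large $n$ in the Artinian local ring $R_P$.

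For $s \geq 1$, split the associativity sum by whether $P \supseteq I$. Primes $P$ with $I \subseteq P$ are exactly $\mathfrak{Q}_0$ and contribute the first term of the desired formula. For $P$ with $I \not\subseteq P$, any $f \in I \setminus P$ is a unit in $R_P$ and annihilates every element of $J_0$, so $(J_0)_P = 0$ and $P$ persists as a top-dimensional minimal prime of $R'$. This yields
\begin{equation*}
e(R) \;=\; \sum_{q \in \mathfrak{Q}_0} e_{I_q}(R_q)\, e(R/q) \;+\; e(R').
\end{equation*}
In $R'$, no associated prime contains $\bar I = IR'$, so by shrinking the generic open set from Corollary~2.5 to avoid these finitely many primes, a general $\bar x_1$ is a nonzerodivisor on $R'$, and $\bar x_1, \ldots, \bar x_s$ is a super-reduction for $\bar I$ whose tail $\bar x_2, \ldots, \bar x_s$ descends to a super-reduction for $\bar I/\bar x_1 R'$ in $R'' := R'/\bar x_1 R'$. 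Since a linear nonzerodivisor preserves multiplicity, $e(R') = e(R'')$, and the induction hypothesis applied to $(R'', \bar I R'')$ (dimension $d-1$, analytic spread $s-1$) expresses $e(R'')$ as a sum over sets $\mathfrak{Q}''_j$ for $0 \leq j \leq s-1$.

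The identification of these terms with those in the target formula uses $x_1 \in I$: the saturation $(\bar x_2, \ldots, \bar x_j) :_{R''} \langle \bar I R''\rangle$ pulls back to $J_j/(J_0 + x_1 R)$, so $\bar J_{j-1} + \bar I R''$ pulls back to $J_j + I$, giving $\mathfrak{Q}''_j \leftrightarrow \mathfrak{Q}_{j+1}$ for $j \geq 1$ and $\mathfrak{Q}''_0 \leftrightarrow \mathfrak{Q}_1$; the associated $j_1$-multiplicities agree by Lemma~2.2(b), and the factors $e(R''/q'') = e(R/q)$ because $x_1 \in q$. For the $\mathfrak{Q}''_0$-term specifically, the length $\lambda(R''_{q''}) = \lambda(R'_{q'}/\bar x_1 R'_{q'})$ equals $j_1(I_q)$: the $1$-dimensional local ring $R'_{q'} = R_q/(J_0)_q$ has $\bar I R'_{q'}$ of maximal analytic spread $1$, and a generic $\bar x_1$ generates a minimal reduction (Northcott--Rees, valid since the residue field at $q$ contains $k$ and is infinite), so $\lambda(R'_{q'}/\bar x_1 R'_{q'}) = e_{\bar I R'_{q'}}(R'_{q'}) = j(\bar I R'_{q'}) = j_1(I_q)$. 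The claim $\ell(I_q/(x_1, \ldots, x_{i-1})R_q) = 1$ for $q \in \mathfrak{Q}_i$ is obtained analogously: $R_q/(J_{i-1})_q$ is $1$-dimensional with maximal ideal minimal over the image of $I_q$ (hence analytic spread $1$ there), and Lemma~2.2(a) transfers the spread to $I_q/(x_1, \ldots, x_{i-1})R_q$ via the surjection onto $I_q/(J_{i-1})_q$ whose kernel is the $I_q$-torsion. The main technical obstacle is the simultaneous bookkeeping: a single generic $\Lambda \in R^{sn}$ must satisfy \emph{all} the open conditions needed at every stage of the induction (nonzerodivisor on each successive quotient, super-reduction of each image ideal, minimal reduction at each $q \in \mathfrak{Q}_i$, and filter-regularity in each relevant associated graded ring); the ``all at once'' formulation emphasized in Remark~2.6 delivers exactly this.
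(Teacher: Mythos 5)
Your proposal follows essentially the same route as the paper: split the associativity sum by whether a top-dimensional minimal prime contains $I$, pass to $\overline R = R/(J_0 + x_1R)$ (so that both dimension and analytic spread drop by one), apply the induction hypothesis, and match the sets $\mathfrak{Q}_i$ and their $j_1$-terms between $R$ and $\overline R$ using Lemma~2.2. The paper inducts formally on $d$ rather than $s$ and first proves the length version, with $\lambda\bigl(R_q/(J_{i-1}R_q + x_iR_q)\bigr)$ in place of $j_1$, before separately identifying those lengths with $j_1$-multiplicities; but since the inductive step in both arguments goes through the same quotient, this is a reorganization rather than a different proof.

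Two points you state as given are in fact substantive. First, the claim that cutting $R/J_0$ by a general $\bar x_1$ drops the analytic spread from $s$ to exactly $s-1$: the images of $\bar x_2,\dots,\bar x_s$ generate a reduction of $\bar I$ in $R''=R/(J_0+x_1R)$ since $x_1$ dies there, which gives $\ell(\bar I R'')\leq s-1$, but the reverse inequality is not automatic. The paper obtains the equality by comparing the initial ideal $G_I(J_0+x_1R,R)$ inside $G={\rm gr}_I(R)$ with $x_1^*G$ (same relevant associated primes, via \cite[3.2]{AM1}) and using that $x_1^*$ is part of a system of parameters of $F=G/mG$; this comparison is also what shows $\overline x_2^{\,*},\dots,\overline x_s^{\,*}$ form a filter-regular sequence in ${\rm gr}_{\bar I}(R'')$, so that the induction hypothesis actually applies to $\bar x_2,\dots,\bar x_s$. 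Second, you do not address the degenerate branch ${\rm dim}\,R/J_0 < d$: there $\mathfrak{Q}_0'=\emptyset$, yet $e(R/J_0)$ computed in its own dimension need not vanish, so your displayed identity $e(R)=\sum_{\mathfrak{Q}_0}e_{I_q}(R_q)\,e(R/q)+e(R')$ fails as written; the paper handles this case separately by observing that then every $\mathfrak{Q}_i$ with $i\geq 0$ is empty, because each $x_i$ is a nonzerodivisor on $R/J_{i-1}$.
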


\begin{proof}  By Corollary 2.5,
there exist general elements $\Lambda=(\lambda_{ij}) \in R^{sn}$
such that $x_1,\ldots,x_s$ form a reduction sequence for $I$. In particular
by \cite[2.8]{AM}, $x_1,\ldots,x_s$ are a filter-regular sequence
for $R$ with respect to the ideal $I$.  We show that whenever
$x_1,\ldots,x_s$ form a reduction sequence for $I$, then
\begin{align}
e(R)=\sum_{q\in\mathfrak{Q}_0}\lambda(R_q)\,e(R/q)+\sum_{i=1}^{s}\sum_{q\in
\mathfrak{Q}_i}\lambda(\frac{R_{q}}{J_{i-1}R_q+x_{i}R_{q}})\,e(R/q).\notag
\end{align}
We use induction on $d$ to prove this. First when $d=0$, one has $g=0$ and
$\mathfrak{Q}_0=\{m\}$, where $m=(m_0, R_1R)$ is the homogeneous maximal ideal of $R$.  Then $e(R)=\lambda(R)=\lambda (R_m)$ and
we are done. Let $d\geq 1$ and
$\mathfrak{Q}_0^{\prime}=\{q\in {\rm Min}(R)\,|\,{\rm dim}\,R/q=d,
I\nsubseteq q\}$.  By the associativity formula,
\begin{align}
e(R)
=\sum_{q\in \mathfrak{Q}_0}\lambda(R_{q})\,e(R/q)+ \sum_{q\in
\mathfrak{Q}_0^{\prime}}\lambda(R_{q})\,e(R/q).\notag
\end{align}
We may assume $s>0$ as otherwise
$\mathfrak{Q}_0^{\prime}=\emptyset$ and the result is obvious. Furthermore  if ${\rm
dim}\,R/J_0<d$, then $\mathfrak{Q}_0^{\prime}=\emptyset$ and $\mathfrak{Q}_i=\emptyset$ for $1\leq i\leq s$. The last assertion follows since
$x_i$ is a non zerodivisor on $R/J_{i-1}$ for $1\leq i\leq s$.  Assume ${\rm dim}\,R/J_0=d$. Since $x_1$ is a linear non
zerodivisor on $R/J_0$,
$$
\sum_{q\in
\mathfrak{Q}_0^{\prime}}\lambda(R_{q})\,e(R/q)=e(R/J_0)=e(R/(J_0+x_1R)).
$$
 Let $\mathfrak{Q}_1^{\prime}=\{q\in {\rm Min}(J_0+x_1R)\,|\,{\rm
dim}\,R/q=d-1, I\nsubseteq q\}$, then
\begin{align}
e(R)
&=\sum_{q\in \mathfrak{Q}_0}\lambda(R_{q})\,e(R/q)+e(R/(J_0+x_1R))\notag\\
&=\sum_{q\in \mathfrak{Q}_0}\lambda(R_{q})\,e(R/q)+\sum_{q\in
\mathfrak{Q}_1}\lambda(\frac{R_{q}}{J_0R_q+x_1 R_q})\,e(R/q)
+\sum_{q\in
\mathfrak{Q}_1^{\prime}}\lambda(\frac{R_{q}}{J_0R_q+x_1
R_q})\,e(R/q).
\end{align}
 Similarly, we may suppose $s>1$
as otherwise $\mathfrak{Q}_1^{\prime}=\emptyset$ and the result is
obvious. Set $\overline{R}=R/(J_0+x_1R)$; then
${\rm dim} \,\overline{R}=d-1$. Let $G_I(J_0+x_1R,R)$ be the
initial ideal of $J_0+x_1R$ in the associated graded ring $G={\rm
gr}_I(R)$. Then $\overline{G}={\rm
gr}_{I\overline{R}}(\overline{R})=G/G_I(J_0+x_1R,R)$ and
$\overline{F}=\overline{G}/\overline{m}\overline{G}=G/(G_I(J_0+x_1R,R)+mG)$.
We will show $\ell(I\overline{R})={\rm dim}\,\overline{F}= s-1$.
Observe $x_1^*,\ldots,x_s^*$ form a system of parameters for
$F=G/mG$. Thus ${\rm dim} \,G/(x_1^*G+mG)=s-1$. Since
$x_1R\subseteq J_0+x_1R\subseteq J_1$,
$G_I(J_0+x_1R,R):_G \langle G_+\rangle= G_I(J_1,R):_G \langle
G_+\rangle=x_1^*G:_G \langle G_+\rangle$ (see \cite[3.2]{AM1}).
Therefore $G_I(J_0+x_1R,R)$ and
$x_1^*G$ have the same relevant associated prime ideals. As
${\rm dim}\,G/(x_1^*G+mG)=s-1>0$, it follows that   ${\rm dim}\,\overline{F}={\rm dim}\,G/(G_I(J_0+x_1R,R)+mG)={\rm dim}\,G/(x_1^*G+mG)=s-1$.

Let $\overline{x_2},\ldots,\overline{x_s}$ be the images of $x_2,\ldots,x_s$ in $\overline{R}$. From the above argument, $G_I(J_0+x_1R,R):_G \langle G_+\rangle=x_1^*G:_G \langle G_+\rangle$.
Hence the initial forms $\overline{x_2}^*,\ldots,\overline{x_s}^*$ of $\overline{x_2},\ldots,\overline{x_s}$ in $\overline{G}$ form a filter-regular sequence for $\overline{G}$ with respect to $\overline{G}_+$. As $x_1^*$ is part of a
system of parameters of $G/mG$,\,
  $\overline{x_2},\ldots,\overline{x_s}$  form a
reduction sequence for $\overline{I}$. Observe
$(J_0,x_1,\ldots,x_{i-1})R:_R \langle I\rangle=J_{i-1}$ for $1\leq i\leq s$. By
the induction hypothesis on $\overline{R}$, we get
$$
e(R/(J_0+x_1R))=\sum_{i=1}^{s}\sum_{q\in
\mathfrak{Q}_i}\lambda(\frac{R_{q}}{J_{i-1}R_q+x_{i}R_{q}})\,e(R/q).
$$
Substituting into  Equation (4) we are done.

Now let $q\in \cup_{i=0}^s \mathfrak{Q}_i$.
If $q\in \mathfrak{Q}_0$, then $\lambda(R_q)=e_{I_q}(R_q)$.
So assume
 $q\in \mathfrak{Q}_i$ for some $i\geq 1$. We will show
$\ell (I_q/(x_1,\ldots,x_{i-1})R_{q})=1$ and
\begin{align}
\lambda(R_{q}/(J_{i-1}R_q+x_{i}R_{q}))=
j_{1}(I_{q}/(x_1,\ldots,x_{i-1})R_{q}).
 \end{align}
First we want to see
$$
\lambda(R_q/(J_{i-1}R_q+x_{i}R_q))
=j_{1}(I(R_q/J_{i-1}R_q)).\notag
$$
Let $\overline{R}=R/J_{i-1}$. Since $q\in V(J_{i-1}+I)$ with ${\rm
dim}\, R/q=d-i$, ${\rm ht}\,J_{i-1}\geq i-1$ and $x_i\in I$ is a non zerodivisor on $R/J_{i-1}$,
one has dim $\overline{R}=d-i+1$.
In the local ring $\overline{R}_{\overline{q}}$, $1\geq {\rm
dim}\,\overline{R}_{\overline{q}}\geq {\rm
grade}\,I\overline{R}_{\overline{q}}\geq 1 $. Therefore
$I\overline{R}_{\overline{q}}$ is
$q\overline{R}_{\overline{q}}$-primary with
$\ell(I\overline{R}_{\overline{q}})={\rm dim}\,\overline{R}_{\overline{q}}=1$. By \cite[3.8 and 2.6]{AM},  we only need
to show the initial form $\overline{x_{i}}^*$  of
$\overline{x_{i}}$ in the associated graded ring ${\rm
gr}_{I\overline{R}_{\overline{q}}}(\overline{R}_{\overline{q}})$
is filter-regular
 with respect
to $({\rm
gr}_{I\overline{R}_{\overline{q}}}(\overline{R}_{\overline{q}}))_+$.
But this comes from  \cite[3.2]{AM1} and the fact that the
filter-regular property  is preserved under localization.

Next we need to show $ \ell (I_q/(x_1,\ldots,x_{i-1})R_{q})=1$,
 and
$$
j_{1}(I\overline{R}_{\overline{q}})=
j_{1}(\frac{I_{q}}{(x_1,\cdots,x_{i-1})R_{q}}).\notag
$$
For this we only need to apply Lemma 2.2 to the local ring
$R_{q}/(x_1,\cdots,x_{i-1})R_{q}$ and its ideal
$I_{q}/(x_1,\cdots,x_{i-1})R_{q}$.

Finally since $\mathfrak{Q}_i=\emptyset$, $0\leq i\leq g-1$, we
get Equation (3).
\end{proof}

\begin{rema}
\em Theorem 3.1 is motivated by the intersection algorithms
constructed by  Achilles and Manaresi \cite{AM1}. Indeed the idea behind the original St\"{u}ckrad-Vogel algorithm of refined intersection theory (cf. \cite[Section 3.2]{AM2}, for example), from which \cite{AM1} evolved, is that the $j$-multiplicity can be calculated by adding together the contributions from top-dimensional components, then cutting by a hyperplane in general position and repeating the process, and so on. Returning to \cite{AM1}, recall that the
degrees of cycles $\upsilon_i(\underline{x},R)$ of $R$ supported
on $V(I)$ are defined by
\begin{align}
{\rm
deg}\,&(\upsilon_0(\underline{x},R))=\sum_{q\in
\mathfrak{Q}_0}\lambda
(R_{q})\,e(R/q),\notag\\
{\rm
deg}\,(\upsilon_i(\underline{x},R))&=\sum_{q\in
\mathfrak{Q}_i}\lambda
\big(\frac{R_{q}}{J_{i-1}R_q+x_{i}R_{q}}\big)\,e(R/q),\,\,1\leq i
\leq s.\notag
\end{align}
They  defined the multiplicity sequence $c_0(I),\ldots,c_d(I)$ with
respect to the ideal $I$ using
 the bigraded ring $T={\rm gr}_m({\rm gr}_I(R))$.
 They  proved that if the images $x_1^o,\ldots,x_s^o$ in $T_{01}=I/mI$
are a filter-regular sequence for $T$ with respect to the ideal
$T_{01}T$ and a system of parameters for $F=G/mG$, then
 ${\rm
deg}(\upsilon_{i}(\underline{x},R))=c_{d-i} (I) $ \cite[4.1]{AM1}.
 The proof of Theorem
3.1 shows that for general elements $(\lambda_{ij})$ in $R^{sn}$,
$c_d(I)=\sum_{q\in \mathfrak{Q}_0}e_{I_q}(R_q)\,e(R/q)$, $c_{d-i}
(I)=\sum_{ q\in \mathfrak{Q}_i
 }j_{1}(\frac{I_{q}}{(x_1,\ldots,x_{i-1})R_{q}})\,e(R/q)$ for $1\leq i\leq s$, and the others are all zero.
\end{rema}

Now we want to  apply Theorem 3.1 to some classes of ideals to obtain better formulas. To do
this, first we need to recall some facts about residual
intersections from \cite{HU}.  Let $R$ be a
Noetherian ring and $I$ an ideal of $R$. Set
$H=(x_1,\ldots,x_t)R:_RI$, where $(x_1,\ldots,x_t)R\subsetneq I$.
If ${\rm ht}\,H\geq t\geq$ ${\rm ht}\,I$, then $H$ is said to be a
{\it $t$-residual intersection} of $I$ with respect to
$(x_1,\ldots,x_t)R$. Furthermore if $I_q=(x_1,\ldots,x_t)R_q$ for
all $q\in V(I)$ with ${\rm ht}\,q\leq t$, then  $H$ is a {\it
geometric $t$-residual intersection} of $I$. Notice that the ideal $H$ is a
geometric $t$-residual intersection of $I$ if and only if ${\rm
ht}\,H\geq t$ and ${\rm ht}(H+I)\geq t+1$.
An ideal $I$ satisfies
condition $G_{t+1} $ if $\mu (I_p)\leq$ ${\rm ht}\,p$ for all $p\in V(I)$
 such that ${\rm ht}\,p\leq t$. Here $\mu(M)$
denotes the least number of generators of a module $M$.

Recall  that an ideal $I$ has the {\it Artin-Nagata property} $AN_t^-$
 if for every $i$ with ${\rm ht}\,I \leq i \leq t$
and every geometric $i$-residual intersection $H$ of $I$, $R/H$ is Cohen-Macaulay \cite{U}.
 The
ideal $I$ is said to be {\it strongly Cohen-Macaulay} (SCM) if the
Koszul homology modules of any set of generators of $I$ are
Cohen-Macaulay modules. Notice that it suffices to check this property
for a fixed system of generators of $I$ \cite{HU}.

The following corollary shows that condition $G_{t+1}$ and Artin-Nagata property $AN_{t-1}^-$ determine
how many terms we can simplify in Equation (3).

\begin{coro}
Let $R=R_0[R_{1}]$ be a standard graded Cohen-Macaulay  ring of
 dimension $d$ with $(R_0,m_0)$ an Artinian local ring.  Assume
 $|R_0/m_0|=\infty$. Let $I=(a_1,\ldots,a_n)R$ be an ideal
generated by homogeneous elements $a_1,\ldots,a_n$  of degree one. Write ${\rm ht}\, I=g$, $\ell(I)=s$
 and assume that the ideal $I$ satisfies  condition $G_{t+1}$ and Artin-Nagata property $AN_{t-1}^-$, where $g\leq t \leq s$.
 For
general elements $\Lambda=(\lambda_{ij})\in R^{s n}$, define $
x_i$  and $\mathfrak{Q}_i$, $1 \leq i \leq s$, as in Theorem
3.1.  Let $ H_{i}=(x_1,\ldots,x_{i})R :_R
I$, where $0 \leq i \leq s$.  Then
\begin{align}
e(R)= e(R/I)+
 \sum_{i=g+1}^{t}e(R/(H_{i-1}+I))+\sum_{i=t+1}^{s}\sum_{ q\in \mathfrak{Q}_i
 }j_{1}(\frac{I_{q}}{(x_1,\ldots,x_{i-1})R_{q}})\,e(R/q).
\end{align}
\end{coro}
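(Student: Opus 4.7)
The plan is to start from Theorem 3.1 and, under the additional Cohen--Macaulay, $G_{t+1}$, and $AN_{t-1}^-$ hypotheses, simplify each summand with index $\max\{1,g\}\leq i\leq t$ while leaving the summands with $t+1\leq i\leq s$ unchanged. The key observation is that the generic reduction sequence $x_1,\ldots,x_s$ produced by Theorem 3.1 can additionally be arranged so that each $H_{i-1}=(x_1,\ldots,x_{i-1})R:_R I$ is a geometric $(i-1)$-residual intersection enjoying the expected Cohen--Macaulay properties.

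The first step is to verify that $J_{i-1}=H_{i-1}$ for $g\leq i\leq t+1$. Under $G_{t+1}$ and $AN_{t-1}^-$, standard residual intersection theory (cf.\ \cite{HU}, \cite{U}) gives, for generic $\Lambda$, the colon stabilization $(x_1,\ldots,x_{i-1}):_R I^{\infty}=(x_1,\ldots,x_{i-1}):_R I$ together with Cohen--Macaulayness of both $R/H_{i-1}$ and $R/(H_{i-1}+I)$, of dimensions $d-i+1$ and $d-i$ respectively. Hence $J_{i-1}=H_{i-1}$, and $\mathfrak{Q}_i$ coincides with the set of top-dimensional minimal primes of $H_{i-1}+I$. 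For $g\geq 1$, at $i=g$ the regular sequence $x_1,\ldots,x_{g-1}$ in the Cohen--Macaulay ring $R$ is unmixed of height $g-1$ with no associated prime containing $I$, so $H_{g-1}=(x_1,\ldots,x_{g-1})R$ and $H_{g-1}+I=I$.

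Next, for each $q\in\mathfrak{Q}_i$ with $g\leq i\leq t$, condition $G_{t+1}$ together with genericity of $\Lambda$ yield $I_q=(x_1,\ldots,x_i)R_q$ via Nakayama's Lemma, so $\overline{I}:=I_q/(x_1,\ldots,x_{i-1})R_q$ is principal in the $1$-dimensional Cohen--Macaulay local ring $\overline{R}:=R_q/(x_1,\ldots,x_{i-1})R_q$. Recognizing $(H_{i-1})_q/(x_1,\ldots,x_{i-1})R_q$ as $0:_{\overline{R}}\langle\overline{I}\rangle$ and applying Lemma 2.2, the $j_1$-multiplicity reduces to the Hilbert--Samuel multiplicity of a principal primary ideal in the $1$-dimensional Cohen--Macaulay ring $R_q/(H_{i-1})_q$, giving $j_1(\overline{I})=\lambda(R_q/(H_{i-1}+I)_q)$. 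Associativity combined with the Cohen--Macaulayness of $R/(H_{i-1}+I)$ then yields $\sum_{q\in\mathfrak{Q}_i}\lambda(R_q/(H_{i-1}+I)_q)\,e(R/q)=e(R/(H_{i-1}+I))$, which contributes $e(R/I)$ at $i=g$ and the expected middle-sum term at $g+1\leq i\leq t$. The case $g=0$ requires only the additional observation that $G_1$ forces $I_q=0$ at every height-zero $q\in\mathfrak{Q}_0$, so the initial $\mathfrak{Q}_0$-sum of Theorem 3.1 equals $\sum_{q\in\mathfrak{Q}_0}\lambda(R_q/I_q)\,e(R/q)=e(R/I)$.

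The main technical obstacle is the residual intersection work in the first step: arranging, with a \emph{single} generic choice of $\Lambda$, the simultaneous colon stabilization, geometric residual intersection property, and Cohen--Macaulayness of both $R/H_{i-1}$ and $R/(H_{i-1}+I)$ for every $i$ in the required range. Once these are in hand, the local $j_1$-to-length identification and the associativity assembly are routine.
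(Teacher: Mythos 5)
Your outline follows the same route as the paper: start from Theorem 3.1, use $AN^-_{t-1}$ to force $J_{i-1}=H_{i-1}$ via Cohen--Macaulayness of $R/H_{i-1}$, identify each local $j_1$-multiplicity with the length $\lambda(R_q/(H_{i-1}+I)_q)$, and collect with the associativity formula (for which one needs ${\rm ht}\,(H_{i-1}+I)=i$, obtained from \cite[1.7]{U}). Most of your embellishments are harmless: Cohen--Macaulayness of $R/(H_{i-1}+I)$ is not actually needed (only the height/dimension statement is), and your Lemma 2.2 derivation of $j_1=\lambda(R_q/(H_{i-1}+I)_q)$ is in effect a re-proof of Equation (5), which the paper simply quotes.

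The genuine gap is exactly the obstacle you flag at the end and then leave open: arranging a \emph{single} generic $\Lambda$ for which Theorem 3.1 applies and, simultaneously, each $H_{i-1}$ with $g\leq i-1\leq t-1$ is a geometric residual intersection with the expected properties. The paper closes this by passing to $R^{\prime}=R[Z]$ with generic variables $z_{ij}$, where \cite[3.2]{HU} makes each $H_i^{\prime}$ a geometric $i$-residual intersection of $IR^{\prime}$, and then invoking \cite[3.1]{HHU} to produce a dense open $U_1\subseteq k^{sn}$ over which the specialization $\pi(H_i^{\prime})$ retains ${\rm ht}\,\pi(H_i^{\prime})\geq i$ and ${\rm ht}(\pi(H_i^{\prime})+I)\geq i+1$; intersecting $U_1$ with the open set coming from Theorem 3.1 gives the required $\Lambda$, and since $\pi(H_i^{\prime})\subseteq H_i$, each $H_i$ is geometric. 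This is also the correct way to obtain $I_q=(x_1,\ldots,x_i)R_q$ for $q\in\mathfrak{Q}_i$: it falls out of $H_i$ being geometric (as $q\not\supseteq H_i$ by the height bound, so $(H_i)_q=R_q$). The ``Nakayama plus genericity'' justification you give instead is circular as stated, because the primes $q\in\mathfrak{Q}_i$ at which you want the generic $x_1,\ldots,x_i$ to generate $I_q$ themselves depend on the choice of $\Lambda$.
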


\begin{proof}
Let $Z=(z_{ij})$, $1\leq i\leq s, 1\leq j\leq n$, be variables
over $R$, and $R^{\prime}=R[Z]$. Set $x_i^{\prime}=\sum_{j=1}^n
 z_{ij}a_j$  and
 $H_{i}^{\prime}=(x_1^{\prime},\ldots,x_{i}^{\prime})R^{\prime}:_{R^{\prime}}
 IR^{\prime}$, $0\leq i\leq s$. Since $I$ satisfies condition $G_{t+1}$, by \cite[3.2]{HU} for each $i$ with $g\leq i\leq t$,
 $H_i^{\prime}$ is a geometric  $i$-residual intersection of
 $IR^{\prime}$, i.e., ${\rm ht}\, H_i^{\prime}\geq i$ and ${\rm ht}(H_i^{\prime}+IR^{\prime})\geq
 i+1$.
 Let $k=R/(m_0,R_1R)=R_0/m_0$, $\Lambda=(\lambda_{ij})\in
 R^{sn}$ and $\overline{\Lambda}$ be the image of  $\Lambda$ in $k^{sn}$.
 Write
 $\pi(H_i^{\prime})$ for the ideal in $R$ generated by the image of $H_i^{\prime}$
 under the evaluation map sending $z_{ij}$ to $\lambda_{ij}$.
 By \cite[3.1]{HHU}, for
 all $i$ with $g\leq i\leq t$, there exists a dense open subset $U_1$ of $k^{sn}$
 such that ${\rm ht}(\pi(H_i^{\prime}))
 \geq i$ and ${\rm ht}(\pi(H_i^{\prime})+I)\geq i+1$ whenever $\overline{\Lambda}\in U_1$.
 Let $U_2$ be the dense open subset of $k^{sn}$ as in Theorem 3.1;  $U=U_1\cap
 U_2$ is still a dense open subset of $k^{sn}$. Let $\Lambda\in R^{sn}$
 with
 $\overline{\Lambda}\in U$.
 Then for $g\leq i\leq t$, since  $
 \pi(H_i^{\prime})\subseteq H_i$, $H_i$ is also a geometric  $i$-residual intersection of
 $I$ and hence $I_q=(x_1,\ldots,x_i)R_q$
  for every $q\in
\mathfrak{Q}_i$.
Now consider $\mathfrak{Q}_i$, where $g\leq i\leq t$.
If $q\in \mathfrak{Q}_g$, since
$I_q=(x_1,\ldots,x_g)R_q$ is a complete intersection,
$e_{I_q}(R_q)=\lambda(R_q/I_q)$ if $g=0$, and
$j_1(I_q/(x_1,\ldots,x_{g-1})R_q)=e_{I_q}(R_q)=\lambda(R_q/I_q)$ if $g>0$. Fix $i$ with $g+1\leq i \leq t$ and let $J_{i-1}$ be defined as in Theorem 3.1. Since $I$ satisfies  $AN_{t-1}^-$, $R/H_{i-1}$ is Cohen-Macaulay and therefore $J_{i-1}=H_{i-1}$. Also from \cite[1.7]{U}, ${\rm ht}\,(J_{i-1}+I)=i$ and hence $\mathfrak{Q}_i\neq \emptyset$.
Let $q\in \mathfrak{Q}_i$. By
Equation (5),
$j_1(I_q/(x_1,\ldots,x_{i-1})R_q)=\lambda(R_q/(J_{i-1}R_q+x_iR_q))=\lambda(R_q/(H_{i-1}+I)_q)$.
Applying  Theorem 3.1 and the
associativity formula,
\begin{align}
e(R)
 =&\sum_{q\in \mathfrak{Q}_g}\lambda(R_q/I_q)\,e(R/q)\notag\\
 +&\sum_{i=g+1}^{t}\sum_{ q\in \mathfrak{Q}_i
 }\lambda(R_q/(H_{i-1}+I)_q)\,e(R/q)+\sum_{i=t+1}^{s}\sum_{ q\in \mathfrak{Q}_i
 }j_{1}(\frac{I_{q}}{(x_1,\ldots,x_{i-1})R_{q}})\,e(R/q)\notag\\
 =& \,e(R/I)+
 \sum_{i=g+1}^{t}e(R/(H_{i-1}+I))+\sum_{i=t+1}^{s}\sum_{ q\in \mathfrak{Q}_i
 }j_{1}(\frac{I_{q}}{(x_1,\ldots,x_{i-1})R_{q}})\,e(R/q).\notag
\end{align}
\end{proof}

\bigskip

\begin{rema}
\em In Corollary 3.3,  we can replace the Artin-Nagata property $AN_{t-1}^-$ by
strongly Cohen-Macaulay. Indeed if $I$ is a strongly Cohen-Macaulay ideal which satisfies condition $G_{t+1}$, then by \cite[3.3]{HU} (see also \cite[3.1]{H}), for $g \leq i \leq
t$, $H_{i}^{\prime}$ is a geometric $i$-residual intersection
of $IR^{\prime}$, $R^{\prime}/H_{i}^{\prime}$ is Cohen-Macaulay and ${\rm
ht}\,H_{i}^{\prime}=i$.  By the proof of Corollary 3.3, for general elements $\Lambda=(\lambda_{ij})\in R^{sn}$,   $H_i$ is  a geometric  $i$-residual intersection of
 $I$ for  $g \leq i \leq
t$. Notice
$Z-\Lambda=(z_{ij}-\lambda_{ij})$ is a regular sequence on both
$R^{\prime}$ and $R^{\prime}/IR^{\prime}$. For $g+1 \leq i \leq
t$, write
$\pi(H_{i-1}^{\prime})$ for the image modulo
$(Z-\Lambda)R^{\prime}$. By \cite[4.7]{HU} (see also \cite[10.4]{KU}), we have $\pi(H_{i-1}^{\prime})=H_{i-1}$
and $R/H_{i-1}$ is also Cohen-Macaulay. Hence
$J_{i-1}=H_{i-1}$.
 Furthermore by the proof of  \cite[1.7]{U}, ${\rm ht}\,(J_{i-1}+I)=i$ and therefore $\mathfrak{Q}_i\neq \emptyset$. The remaining proof is the same as that of  Corollary 3.3.
\end{rema}

When the ideal $I$
 is a complete intersection for every  $q\in
V(I)$ with ${\rm ht}\,q\leq t$,   or is perfect of height 2 satisfying  condition $G_{t+1}$, or is Gorenstein of
height 3 satisfying condition $G_{t+1}$, we obtain nicer formulas.

\begin{coro}
 Let $R=R_0[R_{1}]$ be a  standard
graded Cohen-Macaulay   ring of
 dimension $d$ with $(R_0,m_0)$ an Artinian local ring.  Assume
 $|R_0/m_0|=\infty$. Let $I=(a_1,\ldots,a_n)R$ be an ideal generated by homogeneous elements
$a_1,\ldots,a_n$  of degree one. Write ${\rm
ht }\,I=g$ and $\ell(I)=s$. For general elements
$\Lambda=(\lambda_{ij})\in R^{s n}$,  let $\Lambda_{i-1}$ be the sub-matrix
consisting of the first $i-1$ rows of $\Lambda$ and define $x_i$,
$\mathfrak{Q}_i$ as before, for $1\leq i
\leq s$.

$($a$)$ Assume $I_q$ is a complete intersection for every $q\in
V(I)$ with ${\rm ht}\,q\leq t$, where $g\leq t\leq s$. Then
\begin{align}
e(R)& = e(R/I)+
 \sum_{i=g+1}^{t}e(R/(F_{i-1}+I))
  +\sum_{i=t+1}^{s}\sum_{ q\in \mathfrak{Q}_i
 }j_{1}(\frac{I_{q}}{(x_1,\ldots,x_{i-1})R_{q}})\,e(R/q),\notag
\end{align}
where $F_{i-1}={\rm
 Fitt}_0(I/(x_1,\ldots,x_{i-1})R)$.

$($b$)$ Assume $I$ is a perfect ideal of height $2$ which
satisfies condition $G_{t+1}$.  Write $\mu(I)=n$ and let
$X_{n\times (n-1)}=(x_{ij})$ be a matrix such that $I=I_{n-1}(X)$.
Then
\begin{align}
e(R)=e(R/I)+
 \sum_{i=3}^{t}e(R/(I_n(X \,|\, \Lambda_{
 i-1}^T)+I))+\sum_{i=t+1}^{s}\sum_{ q\in \mathfrak{Q}_i
 }j_{1}(\frac{I_{q}}{(x_1,\ldots,x_{i-1})R_{q}})\,e(R/q).\notag
\end{align}

$($c$)$ Assume $I$ is a perfect Gorenstein ideal of height $3$
which satisfies condition $G_{t+1}$. Write $\mu(I)=n$ and let
$X_{n\times n}=(x_{ij})$ be an alternating matrix such that
$I={\rm Pf}_{n-1}(X)$ $($the ideal generated by the $n-1$ by $n-1$
Pfaffians of $X$$)$. Let
 $T_{i-1}=\begin{pmatrix} X & \Lambda_{i-1}^T\\
 -\Lambda_{i-1} & 0 \end{pmatrix}$,
 define
 $J(T_{i-1})$ to be the $R$-ideal generated by the Pfaffians of all
 principal sub-matrices of  $T_{i-1}$ which contain $X$ for $ 4\leq
 i\leq
 t $ $($see {\rm \cite{KU}}$)$.
 Then
\begin{align}
e(R)=e(R/I)+
 \sum_{i=4}^{t}e(R/(J(T_{i-1})+I))+\sum_{i=t+1}^{s}\sum_{ q\in \mathfrak{Q}_i
 }j_{1}(\frac{I_{q}}{(x_1,\ldots,x_{i-1})R_{q}})\,e(R/q).\notag
\end{align}
\end{coro}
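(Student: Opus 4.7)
The plan is to deduce all three parts from Corollary 3.3, in its strongly Cohen--Macaulay variant (Remark 3.4), by first verifying the hypotheses and then identifying the ideal $H_{i-1}+I$ occurring there with the explicit ideal named in the statement.

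First I would check the hypotheses. In case (a), the local complete intersection assumption gives $\mu(I_q)=\hgt I_q\leq \hgt q$ for every $q\in V(I)$ with $\hgt q\leq t$, which is exactly condition $G_{t+1}$; moreover, locally at such $q$ the ideal $I$ is a complete intersection and hence strongly Cohen--Macaulay, which is all that is needed to run the proof of Corollary 3.3 at these primes. In case (b), every perfect ideal of height two is strongly Cohen--Macaulay by Avramov--Herzog, and $G_{t+1}$ is part of the hypothesis, so Remark 3.4 applies. In case (c), every Gorenstein ideal of height three is strongly Cohen--Macaulay (Watanabe), so Remark 3.4 applies again.

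Next I would identify $H_{i-1}+I$ with the corresponding explicit ideal. In (a), for each $q\in\mathfrak{Q}_i$ with $g+1\leq i\leq t$ the module $M_q=I_q/(x_1,\ldots,x_{i-1})R_q$ is cyclic (because $I_q$ is a complete intersection of height at most $i$ and the $x_j$ are generic linear combinations of the $a_j$), so ${\rm Fitt}_0(M_q)={\rm ann}(M_q)$ and hence $(F_{i-1})_q+(x_1,\ldots,x_{i-1})R_q=(H_{i-1})_q$; the associativity formula then gives $e(R/(H_{i-1}+I))=e(R/(F_{i-1}+I))$. In (b), from the Hilbert--Burch presentation $I=I_{n-1}(X)$ the generators $a_j$ are the signed maximal minors of $X$, and Laplace expansion along the appended column yields $x_k=\pm\det(X\,|\,\lambda_k^T)$; the Huneke--Ulrich description of generic residual intersections of perfect height-two ideals then gives $H_{i-1}+I=I_n(X\,|\,\Lambda_{i-1}^T)+I$. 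In (c), one invokes the Buchsbaum--Eisenbud structure theorem together with the Kustin--Ulrich formula for generic residual intersections of Gorenstein height-three ideals, whose output is precisely $J(T_{i-1})$, whence $H_{i-1}+I=J(T_{i-1})+I$.

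The main obstacle is the determinantal and Pfaffian identifications in (b) and (c), which rest on the structure theorems for generic residual intersections of perfect height-two and Gorenstein height-three ideals (Huneke--Ulrich and Kustin--Ulrich respectively). Once these identifications are in place, each of the three formulas follows immediately by termwise substitution into Corollary 3.3 for $g+1\leq i\leq t$, with the remaining $i\geq t+1$ terms carried over unchanged.
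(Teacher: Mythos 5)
Your overall strategy matches the paper: all three parts are deduced from Corollary 3.3 (in its strongly Cohen--Macaulay form, Remark 3.4) by identifying $H_{i-1}+I$ with the explicit ideal, and parts (b) and (c) are handled essentially as in the paper, citing the Huneke and Kustin--Ulrich descriptions of generic residual intersections of perfect height-two and Gorenstein height-three ideals (the paper uses \cite[4.1]{H} and \cite[10.5]{KU} for the generic description and for the specialization $\pi(H_{i-1}')=H_{i-1}$).

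The issue is in part (a). You assert that $M_q=I_q/(x_1,\ldots,x_{i-1})R_q$ is \emph{cyclic}, and derive $\mathrm{Fitt}_0(M_q)=\mathrm{ann}(M_q)=(H_{i-1})_q$ from that. This is not justified and I do not think it holds in general. Two problems: first, the generality of $\Lambda$ is controlled over $k=R_0/m_0$, not over the residue field $k(q)$ of the (a priori $\Lambda$-dependent) primes $q\in\mathfrak{Q}_i$, so one cannot simply say that generic linear combinations span all but one of the minimal generators of $I_q$; second, for a geometric $(i-1)$-residual intersection with $q$ of height $i$, the module $M_q$ is a rank-one maximal Cohen--Macaulay module over the one-dimensional ring $R_q/(H_{i-1})_q$ (it is the canonical module, under the CM/SCM hypotheses), and such modules need not be cyclic. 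The paper does \emph{not} claim cyclicity. Instead it passes to the generic setup $R'=R[Z]$, uses \cite[3.4]{HU} to get $(H'_{i-1})_q=(x_1',\ldots,x_{i-1}')_q+I_{g'}(U)_q$ with $U$ the relevant sub-matrix of the generic matrix, specializes via Remark 3.4, and then writes down the explicit presentation
$$
R_q^{\binom{g'}{2}}\oplus R_q^{\,i-1}\xrightarrow{\,(d_1\,|\,\pi(U)^T)\,}R_q^{g'}\longrightarrow M_q\to 0
$$
to compute $\mathrm{Fitt}_0(M_q)$ directly, obtaining only the weaker (but sufficient) identity $\mathrm{Fitt}_0(M_q)+I_q=(H_{i-1})_q+I_q$, which is all one needs for the associativity formula. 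Your route via cyclicity would need an additional argument (and may in fact fail); replacing it with the presentation computation recovers the paper's proof.
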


\begin{proof}
In both three cases,  $I$ satisfies condition
 $G_{t+1}$. Write $k=R/(m_0, R_1R)=R_0/m_0$. By the proof of Corollary 3.3, for all $i$ with $g\leq i\leq t$,
  there is a dense open subset $U\subseteq k^{sn}$ such that whenever $\overline{\Lambda}
  =(\overline{\lambda_{ij}})\in U$,
 $H_i=(x_1,\cdots,x_{i})R:_{R}I$ is  a geometric
$i$-residual intersection of
 $I$, and Equation (3) holds. Set $\Lambda=(\lambda_{ij})\in R^{sn}$
 with $\overline{\Lambda}\in U$.

(a) As in the proof of Corollary 3.3, let $Z=(z_{ij})$, $1\leq
i\leq s, 1\leq j\leq n$, be variables over $R$
 and $R^{\prime}=R[Z]$. Let $x_i^{\prime}=\sum_{j=1}^n
 z_{ij}a_j$,
 $H_{i}^{\prime}=(x_1^{\prime},\ldots,x_{i}^{\prime})R^{\prime}:_{R^{\prime}}
 IR^{\prime}$ and $J_{i}=(x_1,\ldots,x_{i})R:_R \langle I\rangle$ for $0\leq i\leq s$.
Consider the local ring $R_q$, where $q\in
V(I)$ with ${\rm ht}\,q= t$.
 Let  $g^{\prime}={\rm ht}\,I_q$.  We may
assume $I_q=(a_1,\ldots,a_{g^{\prime}})_q$ after elementary transformations. There is an invertible $n$ by
$n$ matrix $E$ over $R_q$ such that
$(a_1,\ldots,a_n)^T=E(a_1,\ldots,a_{g^{\prime}},0,\ldots,0)^T$. Fix $i$ with $g^{\prime}+1\leq i\leq t$. Observe that $R_q^{\prime}=R_q[Z]$ and
$(H_{i-1}^{\prime})_q=(x_1^{\prime},\cdots,x_{i-1}^{\prime})R^{\prime}_q:_{R^{\prime}_q}IR^{\prime}_q$.
Let $Z_{i-1}$ be the sub-matrix of  $Z=(z_{ij})$
generated by the first $i-1$ rows. Then $\varphi=Z_{i-1} E$ is a
generic $i-1$ by $n$ matrix over $R_q.$
 If we write
$\varphi=(U \,|\, W) $, where $U$ is a $i-1$ by $g^{\prime}$ sub-matrix,
then
$(x_1^{\prime},\ldots,x_{i-1}^{\prime})^T=U(a_1,\ldots,a_{g^{\prime}})^T$.
By \cite[3.4]{HU},
$(H_{i-1}^{\prime})_q=(x_1^{\prime},\ldots,x_{i-1}^{\prime})_q+I_{g^{\prime}}
(U)_q.$

In the  Cohen-Macaulay local ring $R^{\prime}_q$, since $IR^{\prime}_q$
is a complete intersection, the ideal $IR^{\prime}_q$ is strongly
Cohen-Macaulay and satisfies condition $G_{t+1}$. By the same argument of Remark 3.4, we have
$R_q/(H_{i-1})_q$ is Cohen-Macaulay and
$(J_{i-1})_q=(H_{i-1})_q=\pi((H_{i-1}^{\prime})_q)=(x_1,\ldots,x_{i-1})_q+I_{g^{\prime}}
(\pi(U))_q$.

Observe there is an exact sequence
$$
R_q^{\begin{pmatrix}{g^{\prime}}\\
2 \end{pmatrix} }\oplus R_q^{i-1} \stackrel{(d_1 |\pi(U)^T
)}{\longrightarrow} R_{q}^{g^{\prime}}
{\longrightarrow}I_q/(x_1,\ldots,x_{i-1})_q\rightarrow 0,
$$
where $d_1$ is the $1$-th derivation in the Koszul complex $K(a_1, \ldots, a_{g^{\prime}})$:
$$\cdots R_q^{\begin{pmatrix}{g^{\prime}}\\
2 \end{pmatrix} } \stackrel{d_1
}{\longrightarrow} R_{q}^{g^{\prime}}\stackrel{d_0
}
{\longrightarrow}R_q\rightarrow 0.  $$
So we have
$$
{\rm Fitt}_0(I/(x_1,\ldots,x_{i-1}))_q+I_q=(H_{i-1})_q+I_q=I_{g^{\prime}}
(\pi(U))_q+I_q.
$$
Thus $(J_{i-1}+I)_q=(H_{i-1}+I)_q={\rm
Fitt}_0(I/(x_1,\ldots,x_{i-1}))_q+I_q$.

Finally let  $q\in V(I)$ with ${\rm ht}\,q=t$ and ${\rm ht}\,I_q=g$. For every $i$ with $g+1\leq i\leq t$, $R_q/(H_{i-1})_q$ is Cohen-Macaulay and hence ${\rm ht}\,(J_{i-1}+I)_q=i$ (see the proof of \cite[1.7]{U}).
Therefore  $\mathfrak{Q}_i\neq \emptyset$ for every $i$ with $g+1\leq i\leq t$. We are done by the proof of
Corollary 3.3 and the associativity formula.

(b) Let $R^{\prime}=R[W, Z]$, where $W=(w_{ij})$ is a generic
$n\times (n-1)$ matrix, $Z=(z_{ij})$ is a generic $s\times n$
matrix  and $I^{\prime}=I_{n-1}(W)=(\Delta_1,\ldots,\Delta_n)$, in an obvious notation.
Set
$(x_1^{\prime},\ldots,x_s^{\prime})^T=Z(\Delta_1,\ldots,\Delta_n)^T$
and
$H_{i}^{\prime}=(x_1^{\prime},\ldots,x_{i}^{\prime})R^{\prime}:_{R^{\prime}}I^{\prime}R^{\prime}$
for $0\leq i\leq s$. Fix $3\leq i\leq t$. By \cite[4.1]{H}, we
have $H_{i-1}^{\prime}=I_n(W \,|\, Z_{i-1}^T)$. Let
$(W-X,Z-\Lambda)=(w_{ij}-x_{ij};z_{ij}-\lambda_{ij})$ and write
$\pi(H_{i-1}^{\prime})$ for the image modulo
$(W-X,Z-\Lambda)R^{\prime}$. By a similar argument as in the
proof of \cite[10.5]{KU}, we have  $R/H_{i-1}$ is Cohen-Macaulay and
$H_{i-1}=\pi(H_{i-1}^{\prime})=I_n(X \,|\, \Lambda_{i-1}^T)$.
Hence $J_{i-1}=H_{i-1}=I_n(X \,|\, \Lambda_{i-1}^T)$ and we are
done by the proof of Corollary 3.3.

(c)
 By the proof of
 \cite[10.5]{KU}, for each $i$ with $4\leq i\leq t$, we have
 $R/H_{i-1}$ is Cohen-Macaulay and $H_{i-1}=J(T_{i-1})$. Hence $J_{i-1}=H_{i-1}=J(T_{i-1})$ and
  we are done by the proof of Corollary 3.3.
\end{proof}

\bigskip

\section{Homogeneous inclusions of graded algebras over an Artinian local ring.}

\noindent

In this section, we will consider homogeneous inclusions of two
standard graded Noetherian algebras over an Artinian local ring. First we
consider the case where these two rings have the same dimension.

\begin{theo}
 Let $A=A_0[A_1]\subseteq B=B_0[B_{1}]$ be a homogeneous
inclusion of standard graded Noetherian rings of the same
dimension $d$. Let $A_0=B_0=R_0$ be an Artinian local ring with
maximal ideal $m_0$ and $|R_0/m_0|=\infty$. Write
$A_1A=(a_1,\ldots,a_n)A$, where $a_1,\ldots,a_n$ are homogeneous
elements of degree one, and  ${\rm ht}\, A_1B=g$. Let
$\mathfrak{P}=\{p\in {\rm Spec}(A)\,|\, {\rm dim}\, A/p={\rm dim}\,A\}$ and
assume that $A_p$ is reduced and ${\rm rank}_{A_p} B_p=r$ for
every prime ideal $p\in \mathfrak{P}$. For general elements $\Lambda =
(\lambda_{ij})\in R_0^{d n}$, define $ x_i, \,J_{i},
\,\mathfrak{Q}_i$, $0 \leq i \leq d$, as  before. Then
\begin{align} e(B)=re(A)+
 \sum_{q\in \mathfrak{Q}_0}e_{A_1B_q}(B_q)\,e(B/q)+
 \sum_{i={\rm max}\{1,\,g \}}^{d-1}\sum_{ q\in \mathfrak{Q}_i
 }j_{1}(\frac{A_1B_{q}}{(x_1,\ldots,x_{i-1})B_{q}})\,e(B/q).\notag
\end{align}
\end{theo}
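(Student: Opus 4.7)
The plan is to apply Theorem~3.1 to $B$ with the ideal $I=A_1B$ and then identify the top-dimensional ($i=d$) term of the resulting formula as $re(A)$.

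First I would verify that $\ell(A_1B)=d$, so that Corollary~2.5 produces a super-reduction $x_1,\ldots,x_d$ for $A_1B$ from $d$ general linear combinations of $a_1,\ldots,a_n$. For each $n\geq 0$ the composition $A_n\hookrightarrow A_nB\twoheadrightarrow A_nB/m_BA_nB=F_n$ descends to an injection $A_n/m_0A_n\hookrightarrow F_n$, because a degree count shows $A_n\cap m_BA_nB = m_0 A_n$: the positive-degree part $B_+$ of $m_B$ contributes nothing to degree $n$ of $A_nB$. Hence the $d$-dimensional graded $k$-algebra $A/m_0A$ embeds in the special fiber ring $F$ of $A_1B$, giving $\ell(A_1B)=\dim F\geq d$; the reverse inequality is automatic. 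Applying Theorem~3.1 to $(B,A_1B)$ with $s=d$ then yields
\[
e(B)=\sum_{q\in\mathfrak{Q}_0}e_{(A_1B)_q}(B_q)\,e(B/q)+\sum_{i=\max\{1,g\}}^{d}\sum_{q\in\mathfrak{Q}_i}j_1\!\left(\frac{(A_1B)_q}{(x_1,\ldots,x_{i-1})B_q}\right)e(B/q).
\]

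Because $B$ is graded over the Artinian local ring $R_0$, the only prime of $B$ of Krull dimension zero is the homogeneous maximal ideal $m_B$, and by the super-reduction property $J_{d-1}+x_dB$---and hence $J_{d-1}+A_1B$---is $m_B$-primary, so $\mathfrak{Q}_d=\{m_B\}$. The super-reduction formula recalled in Section~2 then identifies the $i=d$ summand with $\lambda(B/(J_{d-1}+x_dB))=j(A_1B)$. Theorem~4.1 thus reduces to the key identity $j(A_1B)=re(A)$.

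To prove $j(A_1B)=re(A)$ I would compute both sides by associativity, using the rank hypothesis at last. For each $p\in\mathfrak{P}$, $A_p$ is a field with $\lambda(B_p)=r$; length associativity $\lambda(B_p)=\sum_{q\text{ over }p,\,\dim B/q=d}\lambda(B_q)\,[\kappa(q):\kappa(p)]$, combined with the multiplicativity $e(B/q)=[\kappa(q):\kappa(p)]\cdot e(A/p)$ for finite extensions of graded domains of the same dimension, yields
\[
re(A)=\sum_{\substack{q\in\mathrm{Min}(B)\\ \dim B/q=d,\,A_1B\not\subseteq q}}\lambda(B_q)\,e(B/q).
\]
On the other hand, with $G=\mathrm{gr}_{A_1B}(B)$, the quantity $j(A_1B)=e(\Gamma_{m_B}(G))$ decomposes by associativity of the Hilbert--Samuel multiplicity over the top-dimensional associated primes of $\Gamma_{m_B}(G)$; the rank hypothesis makes these correspond precisely to the same set of $q$'s with matching lengths, so the two sums agree. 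The main obstacle is this last matching---showing that the generic rank-$r$ freeness of $B$ over $A$ at primes in $\mathfrak{P}$ aligns the length data of $\Gamma_{m_B}(G)$ with $\lambda(B_q)$ for the relevant top-dimensional minimal primes $q$ of $B$.
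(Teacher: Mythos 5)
Your setup is sound: showing $A/m_0A\hookrightarrow F={\rm gr}_{A_1B}(B)/m_B\,{\rm gr}_{A_1B}(B)$ via the degree count $A_j\cap m_BA_jB=m_0A_j$ correctly yields $\ell(A_1B)=d$, and invoking Theorem 3.1 with $s=d$ then reduces everything to the single identity $j(A_1B)=re(A)$, exactly as the paper does. But the proof of that identity --- the heart of the theorem --- is not actually supplied. You sketch two associativity computations and then write that they ``agree,'' while simultaneously flagging the matching as ``the main obstacle.'' That matching is precisely what needs a proof and is nontrivial: the associativity decomposition of $j(A_1B)=e(\Gamma_{m_B}(G))$ runs over top-dimensional associated primes of the $G$-module $\Gamma_{m_B}(G)$, which live in ${\rm Spec}(G)$, not ${\rm Spec}(B)$; note that $B$ does not even embed in $G$ (the degree-zero piece of $G$ is $B/A_1B$). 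So there is no immediate bijection with the minimal primes $q$ of $B$ you list for $re(A)$, and nothing in your sketch produces one or compares the local lengths $\lambda(\Gamma_{m_B}(G)_P)$ with $\lambda(B_q)$. In addition, your intermediate formula $re(A)=\sum_q\lambda(B_q)e(B/q)$ rests on the multiplicativity $e(B/q)=[\kappa(q):\kappa(p)]\,e(A/p)$, which requires knowing $B/q$ is a finite $A/p$-module of generic rank $[\kappa(q):\kappa(p)]$; you do not justify this (it follows from $\lambda_{A_p}(B_p)=r<\infty$, but it is not automatic and should be argued).

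The paper avoids all of this by citing \cite[6.1]{SUV}, which gives $e(B)=e(G)=re(A)+e_\infty(A,B)$ with $e_\infty(A,B)=e(G/0:_G\langle B_1G\rangle)$, then observing that $B_1B$ and $m_B$ have the same radical so $0:_G\langle B_1G\rangle=\Gamma_{m_B}(G)$, and finally using additivity of multiplicity along $0\to\Gamma_{m_B}(G)\to G\to G/\Gamma_{m_B}(G)\to 0$ to conclude $j_d(A_1B)=e(\Gamma_{m_B}(G))=e(G)-e_\infty(A,B)=re(A)$. If you wish to keep your direct route, you would have to prove from scratch the content of \cite[6.1]{SUV} --- essentially establishing the dictionary between associated primes of $\Gamma_{m_B}(G)$ in $G$ and minimal primes $q$ of $B$ over $p\in\mathfrak{P}$, with matching multiplicities --- which is the substance you have deferred. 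As written, the key step is a gap, not a proof.
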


\begin{proof}
Recall $G={\rm gr}_{A_1B}(B)$ and $T={\rm gr}_m({\rm
gr}_{A_1B}(B))$, where $m=(m_0, B_1B)$ is the homogeneous maximal
ideal of $B$. Then $F=G/mG\simeq A/m_0A$ (see \cite{SUV}). Since ${\rm dim}\, A/m_0A={\rm dim}\,
A=d$,  $\ell(A_1B)=d$.  By Proposition 2.3,   for
general elements $\Lambda = (\lambda_{ij})\in R_0^{d n}$, the images $x_1^o,\ldots,x_d^o$ of $x_1,\ldots,x_d$ in
$T_{01}=I/mI$ are a filter-regular sequence with respect to the
ideal $T_{01}T$ and a system of parameters for $F$. By the proof
of Theorem 3.1, Equation (3) holds for such $\Lambda$. Thus we
only need to show:
$$
re(A)=j_{1}(A_1B_{m}/(x_1,\ldots,x_{d-1})B_{m}).
$$
By Remark 3.2 and \cite[Theorem 4.1]{AM1},
\begin{align}
j_{1}(A_1B_{m}/(x_1,\ldots,x_{d-1})B_{m})=
c_0(A_1B)=j_d(A_1B).\notag
\end{align}
 By \cite[6.1]{SUV}, we know $e(G)=e(B)=re(A)+e_{\infty}(A,B)$, where $e_{\infty}(A,B)=e(G/0:_G\langle B_1G\rangle)$. Since $B_1B$
and $m$ have the same radical, $0:_G\langle
B_1G\rangle=0:_G\langle mG\rangle$. Therefore
$j_d(A_1B)=e(0:_G\langle mG \rangle)=e(G)-e_{\infty}(A, B)=re(A)$
(see also \cite{V}).
\end{proof}

\bigskip

Theorem 4.1 is a generalization of  Theorem 1.1. When $B$ is
equidimensional, we have the following corollary.

\begin{coro}
In the same setting as Theorem 4.1, assume $B$ is equidimensional
and let $\mathfrak{Q}^{\prime}=\{q\in V(A_1B)\cap {\rm Proj}(B)
\,|\,\ell (A_1B_q)={\rm
 dim}\,B_q\}$ as in Proposition 1.2. For general elements
$\Lambda = (\lambda_{ij})\in R_0^{d n}$, define $ x_i, J_{i}$ and
$\mathfrak{Q}_i$, $0 \leq i \leq d$, as before. Then
\begin{align} e(B)=re(A)+
 \sum_{q\in \mathfrak{Q}^{\prime}}j(A_1B_q)\,e(B/q)+
 \sum_{i={\rm max}\{1,\,g \}}^{d-1}\sum_{ q\in \mathfrak{Q}_i\backslash \mathfrak{Q}^{\prime}
 }j_{1}(\frac{A_1B_{q}}{(x_1,\ldots,x_{i-1})B_{q}})\,e(B/q).\notag
\end{align}
\end{coro}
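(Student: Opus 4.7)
My plan is to deduce Corollary 4.2 from Theorem 4.1 by regrouping the right-hand side according to whether each prime $q$ lies in $\mathfrak{Q}'$. I would proceed in three steps.

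First, I would check that $\mathfrak{Q}_0 \subseteq \mathfrak{Q}'$ with matching summands. For $q \in \mathfrak{Q}_0$, equidimensionality of $B$ forces $\dim B_q = 0$, so $A_1B_q$ is $qB_q$-primary; hence $\ell(A_1B_q) = 0 = \dim B_q$ places $q$ in $\mathfrak{Q}'$, and the remark following Definition 2.1 gives $e_{A_1B_q}(B_q) = j(A_1B_q)$.

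Next, for each $q \in \mathfrak{Q}_i \cap \mathfrak{Q}'$ with $1 \leq i \leq d-1$, I would establish
\[
j_{1}\!\left(\frac{A_1B_q}{(x_1,\ldots,x_{i-1})B_q}\right) = j(A_1B_q).
\]
Equidimensionality gives $\dim B_q = i$, and $q \in \mathfrak{Q}'$ gives $\ell(A_1B_q) = i$. Applying Proposition 2.3 to $B_q$ (the coefficients $\lambda_{ij}$ from $R_0 \subseteq B_q$ remain general under localization), the elements $x_1,\ldots,x_i$ form a super-reduction for $A_1B_q$. Then \cite{AM} gives
\[
j(A_1B_q) = \lambda\bigl(B_q/(J'_{i-1}+x_iB_q)\bigr), \quad J'_{i-1} = (x_1,\ldots,x_{i-1})B_q :_{B_q} \langle A_1B_q\rangle.
\]
Since colons with respect to finitely generated ideals commute with localization, $J'_{i-1} = J_{i-1}B_q$, so the right-hand side matches $\lambda(B_q/(J_{i-1}B_q + x_iB_q)) = j_1(A_1B_q/(x_1,\ldots,x_{i-1})B_q)$ by Equation~(5) in the proof of Theorem 3.1.

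Third, I would establish $\mathfrak{Q}' \subseteq \mathfrak{Q}_0 \cup \bigcup_{i=1}^{d-1}\mathfrak{Q}_i$ to ensure no $q \in \mathfrak{Q}'$ is omitted. For $q \in \mathfrak{Q}' \setminus \mathfrak{Q}_0$, set $i = \dim B_q \geq 1$; equidimensionality yields $\dim B/q = d-i$. To see $J_{i-1} \subseteq q$, suppose for contradiction $J_{i-1}B_q = B_q$; then $(A_1B_q)^k \subseteq (x_1,\ldots,x_{i-1})B_q$ for some $k$, so $(A_1B_q)^{k+n} \subseteq (x_1,\ldots,x_{i-1})(A_1B_q)^n$ for all $n \geq 0$, and descending to the special fiber $F_q$ forces the images $x_1^o,\ldots,x_{i-1}^o \in F_{q,1}$ to generate a reduction of $F_{q,+}$---but $\dim F_q = \ell(A_1B_q) = i$ requires at least $i$ generators for any reduction, a contradiction. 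For minimality of $q$ over $J_{i-1}+A_1B$, if $q' \subsetneq q$ contained $J_{i-1}+A_1B$ then equidimensionality and catenarity give $\dim B_{q'} < i$, so $\ell(A_1B_{q'}) < i$; by the genericity of $\Lambda$ the sequence $x_1,\ldots,x_{i-1}$ contains a minimal reduction of $A_1B_{q'}$, making $(x_1,\ldots,x_{i-1})B_{q'}$ itself a reduction and forcing $J_{i-1}B_{q'} = B_{q'}$, contradicting $J_{i-1} \subseteq q'$.

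Combining the three steps transforms the formula of Theorem 4.1 into the form of Corollary 4.2: the terms over $\mathfrak{Q}_0$ and $\mathfrak{Q}_i \cap \mathfrak{Q}'$ collapse into $\sum_{q \in \mathfrak{Q}'} j(A_1B_q)\,e(B/q)$, while the terms over $\mathfrak{Q}_i \setminus \mathfrak{Q}'$ remain as in the statement. The main obstacle I foresee is the third step, where one must carefully transfer the global general-position of $\Lambda$ into local reduction and super-reduction statements at each $q \in \mathfrak{Q}'$ and at its subprimes $q'$; Step 2's super-reduction descent is a closely related technical point, handled by the same machinery of Proposition 2.3.
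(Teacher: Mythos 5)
Your strategy matches the paper's: regroup the sum from Theorem~4.1 by showing each $q \in \mathfrak{Q}'$ lies in the appropriate $\mathfrak{Q}_i$, then refine the choice of $\Lambda$ so that $x_1, \ldots, x_i$ form a super-reduction for $A_1B_q$ (here you should cite Corollary~2.5 rather than Proposition~2.3, and make explicit, as the paper does, that $\mathfrak{Q}'$ is a fixed finite set with $R_0/m_0$ an infinite subfield of each $k(q)$, so the extra conditions amount to avoiding finitely many more proper closed subsets of $k^{dn}$), which converts the $j_1$-summand into $j(A_1B_q)$ via [AM] and Equation~(5). Steps~1 and~2 are sound, including your explicit verification that $\mathfrak{Q}_0 \subseteq \mathfrak{Q}'$, which the paper leaves implicit.

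The one place where your argument as written would not go through is the minimality of $q$ over $J_{i-1}+A_1B$ in Step~3: you invoke genericity of $\Lambda$ at every subprime $q' \subsetneq q$ containing $J_{i-1}+A_1B$, but there are infinitely many such $q'$ and the finitely-many-closed-sets device cannot reach them all; nor does the global system-of-parameters property of $x_1^*,\ldots,x_s^*$ in $F=G/mG$ descend to the local special fibers $G_{q'}/q'G_{q'}$. In fact minimality requires no genericity at all: the reduction-sequence computation in the proof of Theorem~3.1 yields $\dim B/(J_{i-1}+x_iB) \leq d-i$, and since $x_i \in A_1B$ one gets $\dim B/(J_{i-1}+A_1B) \leq d-i$, so any prime $q \supseteq J_{i-1}+A_1B$ with $\dim B/q = d-i$ is automatically minimal over $J_{i-1}+A_1B$. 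This is what the paper tacitly uses when it concludes ``Thus $q \in \mathfrak{Q}_i$'' from the containment $J_{i-1}+A_1B \subseteq q$ and the dimension equality alone, without any minimality argument.
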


\begin{proof}
Let  $q\in \mathfrak{Q}^{\prime}$. First we show that
$q$ is contained in  $\mathfrak{Q}_i$ for some $i$ with $g\leq i\leq d-1$.
Indeed for such $q$, write $\ell(A_1B_q)={\rm dim}\,B_q=i$. Then
$J_{i-1}+A_1B\subseteq q$, since otherwise $(J_{i-1})_q=B_q$ and
$\sqrt{A_1B_q}=\sqrt{(x_1,\ldots,x_{i-1})B_q}$, which contradicts
 the equality $\ell(A_1B_q)=i$. Also ${\rm dim}\,B/q=d-i$ because
$B$ is equidimensional. Thus $q\in \mathfrak{Q}_i$. Notice that
$\mathfrak{Q}^{\prime}$ is a fixed finite set which does not
depend on $x_1,\ldots,x_d$. Moreover $R_0/m_0$ is a subfield of
$k(q)=B_q/qB_q$ for every prime ideal $q$ of $B$. By avoiding
finitely many more proper closed subsets of $k^{dn}$,
 we can choose general elements
$\Lambda=(\lambda_{ij})\in R_0^{dn}$ such that
$x_1,\ldots,x_i$ form a super-reduction for the ideal $A_1B_q$
for each $q\in \mathfrak{Q}^{\prime}$ with ${\rm ht}\,q=i$. Thus
$j_{1}(\frac{A_1B_{q}}{(x_1,\ldots,x_{i-1})B_{q}})=
\lambda(B_q/(J_{i-1}B_q+x_iB_q))=j(A_1B_q)$ \cite{AM}. We are done
by applying Theorem 4.1.
\end{proof}

\bigskip
From Corollary 4.2, we can see that we have indeed added extra
terms to the right hand side of Inequality (2) in Proposition 1.2,
to obtain equality.

When  $B$ is a domain with the same dimension as that of $A$, we have the
following corollary.

\begin{coro}
 Let $A=k[A_1]\subseteq B=k[B_{1}]$ be a homogeneous
inclusion of standard graded Noetherian domains of the same
dimension $d$, where $k$ is an infinite field.  Write
$A_1A=(a_1,\ldots,a_n)A$, where $a_1,\ldots,a_n$ are homogeneous
elements of degree one,  and ${\rm ht}\, A_1B=g$. Let $r=[L: K]$,
where $L={\rm Quot}(B)$  and $K={\rm Quot}(A)$. For general
elements $\Lambda = (\lambda_{ij})\in k^{d n}$, define $ x_i$,
$J_{i-1}$, and $\mathfrak{Q}_i$, $1 \leq i \leq d$, as before. Then
\begin{align} e(B)=re(A)+
\sum_{i=g}^{d-1}\sum_{ q\in \mathfrak{Q}_i
 }j_{1}(\frac{A_1B_{q}}{(x_1,\ldots,x_{i-1})B_{q}})\,e(B/q).\notag
\end{align}
\end{coro}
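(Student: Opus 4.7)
The plan is to read Corollary 4.3 as a direct specialization of Theorem 4.1 under the domain hypothesis, which collapses $\mathfrak{P}$ to the single prime $(0)$ and forces $\mathfrak{Q}_0$ to be empty.

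First I would verify the hypotheses of Theorem 4.1 in the present setting. Here $A_0 = B_0 = k$ is an infinite field, hence an Artinian local ring with $|k| = \infty$, so the standing assumptions are met. Since $A$ is a domain of dimension $d$, the set $\mathfrak{P} = \{p \in \Spec(A)\,|\,\dim A/p = \dim A\}$ consists only of the zero ideal. The localization $A_{(0)} = K$ is a field, hence reduced, and the rank condition becomes $\mathrm{rank}_K(B_{(0)}) = [L:K] = r$, so the hypotheses of Theorem 4.1 hold.

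Next I would show that $\mathfrak{Q}_0 = \emptyset$ and that $\max\{1,g\} = g$. The case $d = 0$ is trivial: $A = B = k$, $r = 1$, $g = 0$, and the asserted sum is empty. For $d \geq 1$, since $A = k[A_1]$ is a positive-dimensional $k$-algebra, $A_1 \neq 0$, so $A_1 B$ is a nonzero homogeneous ideal of the domain $B$, giving $g = \height A_1 B \geq 1$. Any $q \in \mathfrak{Q}_0$ would be minimal over $A_1 B$ with $\dim B/q = d = \dim B$; such a $q$ is a minimal prime of $B$, hence $q = (0)$ as $B$ is a domain. But then $A_1 B \subseteq q = (0)$, contradicting $g \geq 1$. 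Thus $\mathfrak{Q}_0 = \emptyset$ and $\max\{1,g\} = g$.

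Plugging these two observations into the formula of Theorem 4.1 immediately yields
\begin{align*}
e(B) = re(A) + \sum_{i=g}^{d-1} \sum_{q \in \mathfrak{Q}_i} j_1\!\left(\frac{A_1 B_q}{(x_1,\ldots,x_{i-1}) B_q}\right) e(B/q),
\end{align*}
which is exactly the claim. There is no real obstacle here beyond the bookkeeping around the degenerate case $d = 0$ and the short argument that $\mathfrak{Q}_0$ is empty; the substantive content has already been packaged in Theorem 4.1.
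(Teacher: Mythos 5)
Your proof is correct and follows the route the paper clearly intends: Corollary 4.3 is a direct specialization of Theorem 4.1, and the paper itself states it without proof precisely because the reduction is routine. You correctly identify the two things that need checking under the domain hypothesis — that $\mathfrak{P}=\{(0)\}$ with $B\otimes_A K=L$ giving $\mathrm{rank}_K B_{(0)}=[L:K]=r$, and that $\mathfrak{Q}_0=\emptyset$ with $g\geq 1$ (for $d\geq 1$) so that $\max\{1,g\}=g$ — and you isolate the degenerate case $d=0$, where $A=B=k$ and both sides are trivially $1$. This is the same argument the paper omits.
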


\bigskip

By Theorem 4.1, and Corollaries 3.3 and 3.5, it is easy to get the
following corollary.

\begin{coro}
In the same setting as Theorem 4.1, assume
$B$ is  Cohen-Macaulay.   For general elements
$\Lambda=(\lambda_{ij})\in R_0^{d n}$, define  $ x_i$, $ H_{i-1}$ and  $\Lambda_{i-1}$, $1\leq i \leq d$, as before.

$($a$)$ Assume the ideal $A_1B$ satisfies condition $G_d$ and Artin-Nagata property $AN^-_{d-2}$.  Then
\begin{align}
e(B)=re(A)+ e(B/A_1B)+
 \sum_{i=g+1}^{d-1}e(B/(H_{i-1}+A_1B)).\notag
\end{align}

$($b$)$ Assume $A_1B_q$ is a complete intersection for every $q\in
{\rm Proj}\, (B)$.  Then
\begin{align}
e(B)=re(A)+e(B/A_1B)+
  \sum_{i=g+1}^{d-1}e(B/(F_{i-1}+A_1B)),\notag
\end{align}
where $F_{i-1}={\rm
 Fitt}_0(A_1B/(x_1,\ldots,x_{i-1})B)$.

$($c$)$ Assume  $A_1B$ is a perfect ideal of height $2$ which
satisfies condition $G_d$. Write $\mu(A_1B)=n$ and let $X_{n\times
(n-1)}$ be a matrix such that $A_1B=I_{n-1}(X)$.   Then
\begin{align}
e(B)=re(A)+e(B/A_1B)+
  \sum_{i=3}^{d-1}e(B/(I_n(X \,|\, \Lambda_{i-1
 }^T)+A_1B)).\notag
\end{align}

$($d$)$
  Assume $A_1B$ is a perfect Gorenstein ideal of height $3$
which satisfies condition $G_{d}$. Write $\mu(A_1B)=n$ and let
$X_{n\times n}$ be an alternating matrix such that
$A_1B={\rm Pf}_{n-1}(X)$. Let $T_{i-1}=\begin{pmatrix} X & \Lambda_{i-1}^T\\
 -\Lambda_{i-1} & 0 \end{pmatrix}$ and
 $J(T_{i-1})$ be the $B$-ideal generated by the Pfaffians of all
 principal sub-matrices of  $T_{i-1}$ which contain $X$ for $4\leq i\leq d-1$. Then
\begin{align}
e(B)=re(A)+e(B/A_1B)+
 \sum_{i=4}^{d-1}e(B/(J(T_{i-1})+A_1B)).\notag
\end{align}
\end{coro}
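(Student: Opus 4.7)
The plan is to deduce all four parts directly from Corollaries 3.3 and 3.5 applied to $B$ with $I = A_1B$, by choosing the truncation parameter $t = d-1$ and identifying the leftover $i=d$ contribution with $re(A)$ via an identity already established inside the proof of Theorem 4.1.

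First I would import the key facts from that proof: for general $\Lambda \in R_0^{dn}$ one has $\ell(A_1B) = d$, and
$$j_1\!\left(\frac{A_1B_m}{(x_1,\ldots,x_{d-1})B_m}\right) = j_d(A_1B) = re(A),$$
where $m = (m_0, B_1B)$ is the homogeneous maximal ideal of $B$. Since $B$ is graded over the Artinian local ring $R_0$, $m$ is the unique prime of $B$ with ${\rm dim}\,B/q = 0$, and the equality $\ell(A_1B) = d$ forces $m \in \mathfrak{Q}_d$; thus in the notation of Section 3 one has $\mathfrak{Q}_d = \{m\}$ and $e(B/m) = 1$. For part (a), the hypotheses $G_d$ and $AN_{d-2}^-$ on $A_1B$ permit me to apply Corollary 3.3 to $B$ with $t = d-1$ (so that $G_{t+1} = G_d$ and $AN_{t-1}^- = AN_{d-2}^-$), yielding
$$e(B) = e(B/A_1B) + \sum_{i=g+1}^{d-1} e(B/(H_{i-1}+A_1B)) + j_1\!\left(\frac{A_1B_m}{(x_1,\ldots,x_{d-1})B_m}\right),$$
and substituting the displayed identity converts the last summand into $re(A)$.

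Parts (b), (c), (d) follow by the same recipe, using Corollary 3.5(a), (b), (c) respectively in place of Corollary 3.3 and reading off the corresponding descriptions of $H_{i-1}+A_1B$ via Fitting ideals, maximal minors of the matrix $(X \,|\, \Lambda_{i-1}^T)$, or Pfaffians of the bordered alternating matrix $T_{i-1}$. The only point meriting care is that a single choice of general $\Lambda$ should simultaneously satisfy the genericity requirements of Theorem 4.1 and of Corollaries 3.3/3.5; this is not an obstacle, as each requirement excludes only a proper closed subset of $R_0^{dn}$, so a finite intersection of the relevant dense open subsets remains dense open. No new residual-intersection theory is needed here; the proof is essentially a bookkeeping combination of Theorem 4.1 with the results of Section 3.
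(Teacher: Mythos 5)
Your proof is correct and takes essentially the paper's intended route: the paper itself only says the corollary follows ``By Theorem 4.1, and Corollaries 3.3 and 3.5,'' and your reconstruction---apply Corollary 3.3 (resp.\ 3.5) to $B$ with $I=A_1B$ and $t=d-1$, observe $\mathfrak{Q}_d=\{m\}$ with $e(B/m)=1$, and substitute the identity $j_1(A_1B_m/(x_1,\ldots,x_{d-1})B_m)=j_d(A_1B)=re(A)$ from the proof of Theorem 4.1---is exactly the bookkeeping that makes that sentence rigorous. The one cosmetic imprecision is that the genericity conditions are open in $k^{dn}$ for $k=R_0/m_0$ rather than ``in $R_0^{dn}$,'' but that is how the paper itself phrases it in Proposition 2.3 and the point you make about intersecting finitely many dense open subsets is the right one.
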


\begin{rema}
\em In part(a) of Corollary 4.4, we can replace the Artin-Nagata property  $AN^-_{d-2}$ by  strongly
Cohen-Macaulay (see the argument of Remark 3.4).
\end{rema}

Now we consider the case when ${\rm dim}\, A\neq {\rm dim}\,B$. Let $k$ be an
infinite field and $R$ a finitely generated $k$-algebra. Recall that
${\rm deg}_k(R)={\rm min}\{{\rm rank}_S R\}$, where $S$ ranges over all Noether normalizations of $R$.
If $R=k[a_1,\ldots,a_n]$, where $a_1,\ldots,a_n$
are homogeneous elements of degree one,   ${\rm deg}_k(R)=e(R)={\rm rank}_S R$, for any Noether normalization $S$ of $R$ which is a $k$-algebra
generated by linear combinations of $a_1,\ldots,a_n$ \cite{SUV}.

Write ${\rm dim}\,R=d$. Let $Y=(y_{ij})$, $1\leq i\leq d,
1\leq j \leq n$, be variables over $R$. Consider the generic
linear combinations $y_i=\sum_{j=1}^n y_{ij}a_j$, $1\leq i \leq
d$, regarded as elements of $R^{\prime}=R\otimes_k
k^{\prime}=k^{\prime}[a_1,\ldots,a_n]$, where
$k^{\prime}=k(y_{ij})$. By \cite[7.3]{FUV},
$S^{\prime}=k^{\prime}[y_1,\ldots,y_d]\subseteq R^{\prime}$ is a
finite homogeneous inclusion. Hence ${\rm deg}_{k^{\prime}}(R^{\prime})={\rm
rank}_{S^{\prime}}R^{\prime}=e(R^{\prime})=e(R)={\rm deg}_k(R)$.

For $A=A_0[A_1]\subseteq B=B_0[B_{1}]$, a homogeneous
inclusion of standard graded Noetherian rings with $A_0=B_0=R_0$   Artinian local, the
associated graded ring $G={\rm
gr}_{A_1B}(B)=R_0[A_1T][\overline{B_1}]$, where $T$ is a variable
over $B$ and $\overline{B_1}$ is the image of $B_1$ in $G$. By
assigning bi-degree $(1,0)$ to the elements of $\overline{B_1}$
and bi-degree $(0,1)$ to the elements of $A_1T$,
$G=\oplus_{i,j=0}^{\infty}G_{ij}$ is a bi-graded ring with
$\oplus_{j=0}^{\infty}G_{0j}=R_0[A_1T]\simeq A$ (see \cite{SUV}).  Thus we can think that $A\subseteq G$ and  $A_p\otimes_A G$ is a
standard graded Noetherian $A_p$-algebra for every prime ideal $p$ of $A$.

\begin{prop}
Let $A=A_0[A_1]\subseteq B=B_0[B_{1}]$ be a homogeneous
inclusion of standard graded Noetherian rings with ${\rm dim}\,A=s$ and ${\rm dim}\,B=d$.
 Let
 $A_0=B_0=R_0$ be an Artinian local ring with
 maximal ideal $m_0$ and $|R_0/m_0|=\infty$. Write
$A_1A=(a_1,\ldots,a_n)A$, where $a_1,\ldots,a_n$ are homogeneous
elements of degree one.  Let
$\mathfrak{P}=\{p\in {\rm Spec}(A)\,|\, {\rm dim}\, A/p={\rm dim}\,A\}$ and
assume that $A_p$ is reduced,  ${\rm dim}\,A_p\otimes_A G=d-s$ and ${\rm deg}_{A_p}(A_p\otimes_A G)=r$ for
every prime ideal $p\in \mathfrak{P}$, where $G={\rm gr}_{A_1B}(B)$. For general elements $\Lambda =
(\lambda_{ij})\in R_0^{s n}$,
 let $ x_i=\sum_{j=1}^n \lambda_{ij}a_j, $ $1\leq i\leq s$,
 $ J_{s-1}=(x_1,\ldots,x_{s-1}) B:_B \langle A_1B\rangle$ and
$\mathfrak{Q}_s$ be the set of all prime ideals $q$ in ${\rm Min}(
J_{s-1}+A_1B)$ with   ${\rm dim}\,B/q=d-s$. Then
\begin{align}
\sum_{q\in
\mathfrak{Q}_s}j_{1}(\frac{A_1B_{q}}{(x_1,\ldots,x_{s-1})B_{q}})\,e(B/q)=r e(A).\notag
\end{align}
\end{prop}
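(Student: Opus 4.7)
The plan is to recognize the left-hand sum as the Achilles-Manaresi coefficient $c_{d-s}(A_1 B)$, then to compute this coefficient from the bi-graded Hilbert polynomial of $T = {\rm gr}_m G$ together with the hypothesis on the dimension and degree of $A_p \otimes_A G$. First I would show $\ell(A_1 B) = s$: since $G/mG \simeq A/m_0 A$ (the identification recalled before the statement, following \cite{SUV}) and $m_0$ is nilpotent, $\dim(A/m_0 A) = \dim A = s$, so $\ell(A_1 B) = \dim F = s$. By Proposition 2.3 and Corollary 2.5, for general $\Lambda \in R_0^{sn}$ the elements $x_1,\ldots,x_s$ form a reduction sequence for $A_1 B$, so the proof of Theorem 3.1 applied to $B$ with ideal $A_1 B$ yields Equation (3); its $i=s$ summand is exactly the left-hand side of the identity, and by Remark 3.2 together with \cite[Theorem 4.1]{AM1} this summand equals $c_{d-s}(A_1 B)$.

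It remains to show $c_{d-s}(A_1 B) = r\,e(A)$. The bi-graded Hilbert polynomial $P(u,v) = \lambda_{R_0}(T_{u,v})$ of $T = {\rm gr}_m G$ has total degree at most $d-2$, and $c_{d-s}(A_1 B)$ is extracted (up to the factor $(d-s-1)!\,(s-1)!$) from the top monomial $u^{d-s-1}v^{s-1}$. I would compute this coefficient using the $A$-module structure on $G$: since $m_0$ is nilpotent and $A_p$ is a field for each $p \in \mathfrak P$, localization at $p$ collapses $mG$ to the first-grading irrelevant ideal of $A_p\otimes_A G$, giving $A_p\otimes_A T \simeq A_p\otimes_A G$ as bi-graded $A_p$-algebras. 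The graded associativity formula applied slice-by-slice to $T_{u,*}$ (a finitely generated graded $A$-module of dimension $\le s$) then yields
\[
\lambda_{R_0}(T_{u,v}) \;=\; \sum_{p \in \mathfrak P}\lambda_{A_p}\bigl((A_p\otimes_A G)_{u,*}\bigr)\,e(A/p)\,\frac{v^{s-1}}{(s-1)!} \;+\; (\text{lower order in }v),
\]
and the hypothesis $\deg_{A_p}(A_p\otimes_A G) = r$ gives $\lambda_{A_p}((A_p\otimes_A G)_{u,*}) \sim r\,u^{d-s-1}/(d-s-1)!$, so the top-degree coefficient of $P(u,v)$ is $r\,e(A)/((d-s-1)!\,(s-1)!)$, whence $c_{d-s}(A_1 B) = r\,e(A)$.

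The main obstacle is the last matching step: verifying that the Achilles-Manaresi convention really identifies $c_{d-s}(A_1 B)$ with the stated multiple of the $u^{d-s-1}v^{s-1}$-coefficient of $P(u,v)$, and handling the degenerate cases $s = 1$ or $d = s+1$ where one variable drops out of the top monomial. A cleaner alternative, which avoids Hilbert-polynomial bookkeeping altogether, would be to iterate the strategy of the proofs of Theorems 3.1 and 4.1: quotienting $B$ by the successive saturations $(x_1,\ldots,x_{i-1})B:_B\langle A_1 B\rangle$ produces a tower of rings in which the analytic spread drops to one after $s-1$ cuts, at which point Theorem 4.1 applies in the resulting equal-dimension setting and delivers $c_{d-s}(A_1 B) = r\,e(A)$ directly.
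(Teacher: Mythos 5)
Your first step, reducing the claim to $c_{d-s}(A_1B)=re(A)$ via Equation (3) and Remark 3.2/\cite[4.1]{AM1}, is exactly what the paper does. From there the paper and your proposal part ways. The paper does \emph{not} compute the bigraded Hilbert polynomial of $T$: for $s<d$ it enlarges $A$ by adjoining $l=d-s$ generic linear forms $y'_1,\dots,y'_l\in G_{10}$ (over a generic base extension), producing a ring $A''$ with $\dim A''=d=\dim G'$ and $e(A'')=e(A)$, and then realizes $re(A'')$ as $j_d$ of an auxiliary ideal $I=(G_{01},y'_1,\dots,y'_l)G'$ via a carefully chosen super-reduction, feeding that back through the degrees of the St\"uckrad--Vogel cycles to get $c_{d-s}(A_1B)$. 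The crucial algebraic input is $e(G)=e(B)=re(A)+e_\infty(A,B)$ from \cite[6.1]{SUV}, applied after the dimension of $A$ has been forced up to $d$. Your route is to compute the $u^{d-s-1}v^{s-1}$ coefficient of $\lambda(T_{u,v})$ directly, which is a genuinely different (and in principle cleaner, more self-contained) strategy.

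That said, your proposal as written has real gaps. First, you yourself flag the key unverified step: that the Achilles--Manaresi $c_{d-s}(A_1B)$ is exactly the $(d-s-1)!\,(s-1)!$-normalized coefficient of $u^{d-s-1}v^{s-1}$ with $u$ the $m$-adic and $v$ the $I$-adic degree. The indexing conventions in \cite{AM1} are easy to get backwards (i.e.\ $c_{d-s}$ could a priori correspond to $u^{s-1}v^{d-s-1}$), and the boundary cases $s=d$ (where $u^{d-s-1}$ disappears, this is the $j$-multiplicity case) and $s=1$ are exactly where the convention is pinned down, so this needs to be nailed rather than assumed. Second, the isomorphism $A_p\otimes_AT\simeq A_p\otimes_AG$ does hold (flatness of $A_p$ over $A$ together with $m_0A_p=0$ gives $m^nG\otimes_AA_p=(G\otimes_AA_p)_{\ge n,*}$), and the slice-by-slice associativity for the coefficient of $v^{s-1}$ is a legitimate use of the graded associativity formula for $T_{u,*}$ over $A$; but you still need to argue that $\lambda_{A_p}((A_p\otimes_AG)_u)$ agrees for $u\gg0$ with a polynomial of degree exactly $d-s-1$ and leading coefficient $r/(d-s-1)!$, which uses both the hypothesis $\dim A_p\otimes_AG=d-s$ \emph{and} the identification $\deg_{A_p}(A_p\otimes_AG)=e(A_p\otimes_AG)$ as a standard graded $A_p$-algebra; none of this is automatic and none of it is written out. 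Finally, your ``cleaner alternative'' does not work as stated: after quotienting $B$ by $s-1$ general saturations you land in a ring of dimension $d-s+1$ with analytic spread $1$, which is \emph{not} an equal-dimension situation with $A$ (or any natural quotient of $A$) unless $s=d$, so Theorem 4.1 cannot be invoked there. The paper's solution to precisely this obstruction is to \emph{raise} $\dim A$ to $d$ rather than to lower $\dim B$, and that construction is the heart of the argument you would be replacing.
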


\begin{proof}
Observe $\ell(A_1B)={\rm dim}\,A=s$ (see the proof of Theorem 4.1). By Proposition 2.3
and \cite{AM1} (see Remark 3.2), for general elements
$\Lambda=(\lambda_{ij})\in R_0^{sn}$,
$$\sum_{q\in
\mathfrak{Q}_s}j_{1}(\frac{A_1B_{q}}{(x_1,\ldots,x_{s-1})B_{q}})\,e(B/q)
={\rm deg}\,\upsilon_s(\underline{x},B)
={\rm
deg\,}\upsilon_s(\underline{x}^*,G)=c_{d-s}(A_1B),$$
 which   does not depend on the
choice of $\Lambda$. We only need to show $c_{d-s}(A_1B)=re(A)$. Observe $G=R_0[A_1T][\overline{B_1}]$ and
$(A_1T)G=G_+=\oplus_{i=0}^{\infty}\oplus_{j=1}^{\infty}G_{ij}$.
When $s=d$, by the proof of Theorem 4.1, $c_0(A_1B)={\rm
deg\,}\upsilon_d(\underline{x}^*,G)=j((A_1T)G)=$ $({\rm rank}_A\,G)\cdot
e(A)=$ $r e(A)$. Suppose $s<d$ and let $l=d-s$.
Set $A=R_0[A_1]=R_0[a_1,\ldots,a_n]\subseteq
B=R_0[B_1]=R_0[b_1,\ldots,b_{\tau}]$. Then $A\simeq
R_0[a_1T,\ldots,a_nT]\subseteq G=R_0[a_1T,\ldots,a_nT,
\overline{b_1},\ldots,\overline{b_{\tau}}]$, where
$\overline{b_1},\ldots,\overline{b_{\tau}}$ are the images of $b_1,
\ldots,b_{\tau}$ in $G_{10}$. We may assume $A=
R_0[a_1T,\ldots,a_nT]= R_0[G_{01}]$. Let $\widetilde{R_0}=R_0[Z, Y, W]$ and $R_0^{\prime}=(\widetilde{R_0})_{m_0\widetilde{R_0}}$, where
$Z=(z_{ij})$, $1\leq i\leq s$, $1\leq j\leq n$,
$Y=(y_{\mu \nu})$, $1\leq \mu\leq l$, $1\leq
\nu\leq \tau$, $W=(w_{\alpha \beta})$, $1\leq \alpha \leq l$, $1\leq
\beta \leq n+l$, are variables over $G$. Let
$G^{\prime}=G\otimes_{R_0} R_0^{\prime}=R_0^{\prime}[G_{10},G_{01}]$,
$A^{\prime}=A\otimes_{R_0} R_0^{\prime} =R_0^{\prime}[G_{01}]$ and
$A^{\prime\prime}=R_0^{\prime}[G_{01},
y_1^{\prime},\ldots,y_l^{\prime}]$, where
$y_{\mu}^{\prime}=\sum_{\nu=1}^{\tau}y_{\mu \nu}\overline{b_{\nu}}$
for $1\leq \mu \leq l$.  For every $p^{\prime}\in {\rm Spec}(A^{\prime})$ with ${\rm dim}\,A^{\prime}/p^{\prime}={\rm dim}\,A^{\prime}=s$, $p^{\prime}=pA^{\prime}$ for some $p\in \mathfrak{P}$. Hence  $A^{\prime}_{p^{\prime}}$ is a field, $A^{\prime}_{p^{\prime}}\otimes_{A^{\prime}}\,G^{\prime}$ is a finitely generated standard graded
$A^{\prime}_{p^{\prime}}$-algebra with dimension equal to $l$ and ${\rm deg}_{A^{\prime}_{p^{\prime}}}(A^{\prime}_{p^{\prime}}\otimes_{A^{\prime}}\,G^{\prime})
={\rm deg}_{A_p}(A_p\otimes_A G)=r$ (see the argument before Proposition 4.6).
By \cite[7.3]{FUV}, ${\rm dim}\,A^{\prime}_{p^{\prime}}\otimes_{A^{\prime}}A^{\prime\prime}={\rm dim}\,A^{\prime}_{p^{\prime}}\otimes_{A^{\prime}}G^{\prime}=l=d-s$.
Hence
${\rm dim}\,A^{\prime\prime}={\rm dim}\,G^{\prime}=d$.
 Notice for every minimal prime ideal $p^{\prime\prime}$ of $A^{\prime\prime}$ with ${\rm dim}\,A^{\prime\prime}/p^{\prime\prime}=d$, $p^{\prime\prime}$ is the extension $p^{\prime}A^{\prime\prime}$, where $p^{\prime}$ is a prime ideal of $A^{\prime}$ with ${\rm dim}\,A^{\prime}/p^{\prime}=s$. Thus $A^{\prime\prime}_{p^{\prime\prime}}$ is reduced and ${\rm rank}_{A^{\prime\prime}_{p^{\prime\prime}}}\,G^{\prime}_{p^{\prime\prime}}={\rm rank}_{A^{\prime}_{p^{\prime}}\otimes_{A^{\prime}}A^{\prime\prime}}\,A^{\prime}_{p^{\prime}}\otimes_{A^{\prime}}G^{\prime}
={\rm deg}_{A^{\prime}_{p^{\prime}}}(A^{\prime}_{p^{\prime}}\otimes_{A^{\prime}} G^{\prime})=r$ (see also the argument before Proposition 4.6).

Now
consider the ideal
$I=(G_{01}, y_1^{\prime},\ldots,y_l^{\prime})G^{\prime}$. Let
$y_{\alpha}^{\prime\prime}=\sum_{\beta=1}^{n}w_{\alpha \beta}a_{\beta}T+\sum_{\beta=n+1}^{n+l}w_{\alpha \beta}y_{\beta-n}^{\prime}$, $1\leq \alpha \leq l$,
$x_i^{\prime}=\sum_{j=1}^{n}z_{ij}a_jT$, $1\leq i\leq s$; then
$y_1^{\prime\prime},\ldots,y_l^{\prime\prime},x_1^{\prime},\ldots,x_s^{\prime}$
form a super-reduction for $I$. Indeed, it is easy to see
that $y_1^{\prime\prime},\ldots,y_l^{\prime\prime}$ are filter-regular with respect to $I$, as they are generic linear combinations of the generators of $I$. Moreover, for every prime ideal $P^{\prime}\in {\rm Spec}(G^{\prime})$, if $P^{\prime}$ contains  $(y_1^{\prime\prime},\ldots,y_l^{\prime\prime})$ and $G_{01}$, then $P^{\prime}$ contains $\sum_{\beta=n+1}^{n+l}w_{\alpha \beta}y_{\beta-n}^{\prime}$ for $1\leq \alpha \leq l$.
Since the matrix $(w_{\alpha\beta})$, $1\leq \alpha\leq l$, $n+1\leq \beta\leq n+l$, is invertible over $G^{\prime}$, $P^{\prime}$ contains $y_{\mu}^{\prime}$ for $1\leq \mu\leq l$. Thus $P^{\prime}\supseteq I$. This shows that $y_1^{\prime\prime},\ldots,y_l^{\prime\prime},x_1^{\prime},\ldots,x_s^{\prime}$
form a filter-regular sequence for $G^{\prime}$ with respect to $I$. By a similar argument, one can actually show that they  form a super-reduction for $I$. Let
$\mathfrak{P}^{\prime}=\{P^{\prime}\in {\rm
Min}(y_1^{\prime\prime},\ldots,y_l^{\prime\prime},x_1^{\prime},\ldots,x_{s-1}^{\prime})\,|\,
{\rm dim}\,G^{\prime}/P^{\prime}=1, G_{01}G^{\prime}\nsubseteq
P^{\prime}\}$. Notice that $y_1^{\prime\prime},\ldots,y_l^{\prime\prime}$ are generic linear combinations of the generators of $(G_{10}, G_{01})G^{\prime}$. By the proof of Theorems 3.1, 4.1 and the argument similar to \cite[6.5]{SUV}, we have
\begin{align}
& re(A^{\prime\prime})= j_d(I)\notag\\
=& \,\lambda(G^{\prime}/((y_1^{\prime\prime},\ldots,y_l^{\prime\prime},x_1^{\prime},
\ldots,x_{s-1}^{\prime})G^{\prime}:_{G^{\prime}}\langle I\rangle+x_{s}^{\prime}G^{\prime}))\notag\\
=& \,e(G^{\prime}/(y_1^{\prime\prime},\ldots,y_l^{\prime\prime},x_1^{\prime},
\ldots,x_{s-1}^{\prime})G^{\prime}:_{G^{\prime}}\langle I\rangle)\notag\\
=& \,\sum_{P^{\prime}\in \mathfrak{P}^{\prime}}
\lambda(G^{\prime}_{P^{\prime}}/(y_1^{\prime\prime},\ldots,y_l^{\prime\prime},x_1^{\prime},
\ldots,x_{s-1}^{\prime})G^{\prime}_{P^{\prime}})\,e(G^{\prime}/P^{\prime})\notag\\
=& \,e(G^{\prime}/((x_1^{\prime},\ldots,x_{s-1}^{\prime})G^{\prime}:_{G^{\prime}}\langle
G_{01}G^{\prime
}\rangle+(y_1^{\prime\prime},\ldots,y_l^{\prime\prime})))\notag\\
=& \,e(G^{\prime}/(x_1^{\prime},\ldots,x_{s-1}^{\prime})G^{\prime}:_{G^{\prime}}\langle
G_{01}G^{\prime}\rangle)\notag\\
=& \,e(G^{\prime}/((x_1^{\prime},
\ldots,x_{s-1}^{\prime})G^{\prime}:_{G^{\prime}}\langle
G_{01}G^{\prime}\rangle+x_{s}^{\prime}G^{\prime}))\notag\\
=& \,{\rm deg\,}\upsilon_s(x_1^{\prime},\ldots,x_s^{\prime},G^{\prime})\notag\\
=& \,c_{d-s}(A_1B^{\prime})=c_{d-s}(A_1B).\notag
\end{align}

Finally since $e(A^{\prime\prime})=e(A^{\prime})=e(A)$,  we are done.
\end{proof}

\begin{rema}
\rm   In Proposition 4.6, if in addition we assume that $B$ is equidimensional and universally catenary, then  $G$ is also equidimensional by \cite[2.2]{SUV}. Observe for every $p\in \mathfrak{P}$, there exists  $P\in {\rm Min}(G)$ which contracts back to $p$. Since $G$ is equidimensional, for every such $P$, ${\rm dim}\,G/P=d$. Notice ${\rm dim}\,A/p=s$. Hence ${\rm Quot}(G/P)$ has transcendence degree $l=d-s$ over ${\rm Quot}(A/p)$.  Therefore  ${\rm dim}\,A_p\otimes_A G=l$ for every prime ideal $p\in \mathfrak{P}$.
\end{rema}

\begin{rema}
\rm First observe in Proposition 4.6,  for each $p\in \mathfrak{P}$, since $A_p\otimes_A G$ is standard graded over $A_p$, $r={\rm deg}_{A_p}(A_p\otimes_A G)=e(A_p\otimes_A G)$ (see the argument before Proposition 4.6).
 Moreover if we assume $A$ is a domain, one can replace $r$ by ${\rm
rank}_{A[y_1,\ldots,y_l]}\, B$, where $y_{i}=\sum_{j=1}^{\tau}
\lambda_{ij}b_{j}$, $1\leq i\leq l$, for general elements
$(\lambda_{ij})\in k^{l\tau}$, and $k$ is the residue field of $A$.
\end{rema}

\begin{proof}
 One can show $({\rm
rank}_{A^{\prime}[y_1^{\prime},\ldots,y_l^{\prime}]}\,B^{\prime})\cdot
e(A)=c_{d-s}(A_1B)$ by  the same argument as in Proposition 4.6,
where $A^{\prime}$, $B^{\prime}$, $y_i^{\prime}$, $1\leq i\leq l$,
are constructed in the same way as in that proof. By
\cite[p.251]{SUV},  ${\rm
rank}_{A^{\prime}[y_1^{\prime},\ldots,y_l^{\prime}]}\,B^{\prime}={\rm
rank}_{A[y_1,\ldots,y_l]}\, B$ for general elements
$(\lambda_{ij})\in k^{l\tau}$. Since $r
e(A)=c_{d-s}(A_1B)$, we are done.
\end{proof}

\bigskip

By Theorem 3.1 and Proposition 4.6, we have the following theorem.

\begin{theo}
In the same setting as Proposition 4.6, let $g={\rm ht}\,A_1B$. For general elements $\Lambda=(\lambda_{ij})\in
R_0^{s n}$, define $ x_i$ and $  \mathfrak{Q}_i$, $0 \leq i \leq
s$, as before.  Then
\begin{align}
e(B)=re(A)+
 \sum_{q\in \mathfrak{Q}_0}e_{A_1B_q}(B_q)\,e(B/q)+
 \sum_{i={\rm max}\{1,\,g \}}^{s-1}\sum_{ q\in \mathfrak{Q}_i
 }j_{1}(\frac{A_1B_{q}}{(x_1,\ldots,x_{i-1})B_{q}})\,e(B/q).\notag
\end{align}

\end{theo}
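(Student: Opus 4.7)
The plan is to deduce Theorem~4.9 by a direct combination of Theorem~3.1 applied to the ring $B$ with the ideal $A_1B$, together with Proposition~4.6, which identifies the top-dimensional contribution of that formula as $re(A)$. There is really nothing new to prove beyond noting that the two statements fit together perfectly.

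First I would verify that $\ell(A_1B) = s$, as is done at the start of the proof of Theorem~4.1. Setting $G = {\rm gr}_{A_1B}(B)$ and $m = (m_0, B_1B)$ the homogeneous maximal ideal of $B$, the special fiber ring satisfies $F = G/mG \simeq A/m_0A$ by \cite{SUV}, and hence $\ell(A_1B) = {\rm dim}\,F = {\rm dim}\,A/m_0A = {\rm dim}\,A = s$. Moreover $A_1B = (a_1,\ldots,a_n)B$ is generated by homogeneous elements of degree one, so the hypotheses of Theorem~3.1 are satisfied with $R = B$ (of dimension $d$) and $I = A_1B$ (of analytic spread $s$), and the sets $\mathfrak{Q}_i$, the elements $x_i$, and the ideals $J_{i-1}$ appearing in Theorem~4.9 coincide with those produced in Theorem~3.1 and those used in Proposition~4.6.

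Next, Theorem~3.1 produces a dense open $U_1 \subseteq R_0^{sn}$ such that for every $\Lambda = (\lambda_{ij})$ with $\overline{\Lambda} \in U_1$,
$$e(B) = \sum_{q \in \mathfrak{Q}_0} e_{A_1B_q}(B_q)\,e(B/q) + \sum_{i=\max\{1,g\}}^{s} \sum_{q \in \mathfrak{Q}_i} j_1\!\left(\frac{A_1B_q}{(x_1,\ldots,x_{i-1})B_q}\right)\,e(B/q),$$
with the sum indexed up to $i = s = \ell(A_1B)$. Simultaneously, Proposition~4.6 provides a dense open $U_2 \subseteq R_0^{sn}$ such that for every $\Lambda$ with $\overline{\Lambda} \in U_2$,
$$\sum_{q \in \mathfrak{Q}_s} j_1\!\left(\frac{A_1B_q}{(x_1,\ldots,x_{s-1})B_q}\right)\,e(B/q) = re(A).$$

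Finally, choose $\Lambda$ with $\overline{\Lambda} \in U_1 \cap U_2$; this remains a dense open subset of $R_0^{sn}$, so such $\Lambda$ exist and are themselves general. For any such $\Lambda$, both identities hold, and substituting the Proposition~4.6 formula for the $i = s$ term in the Theorem~3.1 expansion yields exactly the statement of Theorem~4.9. The only thing to check is the compatibility of the two ``general element'' conditions, which is automatic since both are cut out by dense open subsets of the same affine space; there is no substantive obstacle, and the main work has already been done in Proposition~4.6.
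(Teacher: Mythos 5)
Your proof is correct and follows exactly the paper's own route: the paper's entire justification for Theorem 4.9 is the one-line statement "By Theorem 3.1 and Proposition 4.6," which you have unpacked in the intended way—apply Theorem 3.1 with $R=B$, $I=A_1B$, then substitute $re(A)$ for the $i=s$ term via Proposition 4.6, after intersecting the two dense open genericity loci in $k^{sn}$. The only small point you leave implicit (already handled in the paper's proof of Theorem 4.1) is that Theorem 3.1's genericity is really a condition on $\overline{\Lambda}\in k^{sn}$, so one may take $\Lambda\in R_0^{sn}$ rather than $B^{sn}$ without loss.
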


\bigskip

By Corollaries 3.3, 3.5 and Proposition 4.6, we have the following
corollary.

\begin{coro}
Let $A\subseteq B$ be as in Proposition 4.6. Assume  $B$ is  Cohen-Macaulay.
Also write
$A_1A=(a_1,\ldots,a_n)A$, where $a_1,\ldots,a_n$ are homogeneous
elements of degree one, and  $g={\rm ht}\,A_1B$. Let
$\mathfrak{P}=\{p\in {\rm Spec}(A)\,|\, {\rm dim}\, A/p={\rm dim}\,A\}$ and
assume that $A_p$ is reduced and ${\rm deg}_{A_p}(A_p\otimes_A G)=r$ for
every prime ideal $p\in \mathfrak{P}$, where $G={\rm gr}_{A_1B}(B)$.
   For general elements $\Lambda=(\lambda_{ij})\in
R_0^{s n}$, define $ x_i$, $ H_{i-1}$ and $\Lambda_{i-1}$, $1 \leq i
\leq s$, as before.

$($a$)$
Assume the ideal $A_1B$ satisfies condition $G_s$ and Artin-Nagata property $AN^-_{s-2}$. Then
\begin{align}
e(B)=re(A)+ e(B/A_1B)+
 \sum_{i=g+1}^{s-1}e(B/(H_{i-1}+A_1B)).\notag
\end{align}

$($b$)$
 Assume $A_1B_q$ is a complete intersection for every $q\in
{\rm Proj}\, (B)$. Then
\begin{align}
e(B)=re(A)+e(B/A_1B)+
 \sum_{i=g+1}^{s-1}e(B/(F_{i-1}+A_1B)).\notag
\end{align}
where $F_{i-1}={\rm
 Fitt}_0(A_1B/(x_1,\ldots,x_{i-1})B)$.

$($c$)$ Assume  $A_1B$ is a perfect ideal of height $2$ which
satisfies condition $G_s$. Write $\mu(A_1B)=n$ and let $X_{n\times
(n-1)}$ be a matrix such that $A_1B=I_{n-1}(X)$.   Then
\begin{align}
e(B)=re(A)+e(B/A_1B)+
 \sum_{i=3}^{s-1}e(B/(I_n(X \,|\, \Lambda_{i-1
 }^T)+A_1B)).\notag
\end{align}

$($d$)$
Assume $A_1B$ is a perfect Gorenstein ideal of height $3$
which satisfies condition $G_{s}$. Write $\mu(A_1B)=n$ and let
$X_{n\times n}=(x_{ij})$ be an alternating matrix such that
$A_1B={\rm Pf}_{n-1}(X)$.  Let
 $T_{i-1}=\begin{pmatrix} X & \Lambda_{i-1}^T\\
 -\Lambda_{i-1} & 0 \end{pmatrix}$ and
 $J(T_{i-1})$ be the $B$-ideal generated by the Pfaffians of all
 principal sub-matrices of  $T_{i-1}$ which contain $X$ for $4\leq i\leq s-1$.
  Then
\begin{align}
e(B)=re(A)+e(B/A_1B)+
 \sum_{i=4}^{s-1}e(B/(J(T_{i-1})+A_1B)).\notag
\end{align}
\end{coro}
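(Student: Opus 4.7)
The plan is to deduce each of parts (a)--(d) from the corresponding structural result in Section 3, applied to the Cohen--Macaulay ring $B$ with ideal $A_1B$, and then to use Proposition 4.6 to identify the remaining top-dimensional contribution as $r\,e(A)$.

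First I would note, as already recorded inside the proof of Proposition 4.6, that $\ell(A_1B) = \dim A = s$. Thus $B$ is a standard graded Cohen--Macaulay algebra over the Artinian local ring $R_0$ and the ideal $I = A_1B$ (generated in degree one) has analytic spread $s$, placing us squarely in the setting of the Section 3 corollaries. The hypotheses imposed on $A_1B$ in (a)--(d) (namely $G_s$ together with $AN^-_{s-2}$; being locally a complete intersection on $\mathrm{Proj}(B)$; perfect of height $2$ satisfying $G_s$; and perfect Gorenstein of height $3$ satisfying $G_s$) are exactly the hypotheses that Corollary 3.3, Corollary 3.5(a), Corollary 3.5(b), and Corollary 3.5(c), respectively, require with the choice $t = s - 1$. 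Applying the appropriate one of these corollaries with $R = B$, $I = A_1B$, $t = s - 1$ then produces in each case an equality of the shape
\begin{align}
e(B) = e(B/A_1B) + \sum_{i=g+1}^{s-1} e\bigl(B/(Y_{i-1} + A_1B)\bigr) + \sum_{q \in \mathfrak{Q}_s} j_1\!\left(\frac{A_1B_q}{(x_1,\ldots,x_{s-1})B_q}\right) e(B/q),\notag
\end{align}
where $Y_{i-1}$ denotes the ideal appropriate to the part under consideration: $H_{i-1}$, $F_{i-1}$, $I_n(X \mid \Lambda_{i-1}^T)$, or $J(T_{i-1})$ in (a), (b), (c), (d) respectively.

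Next I would invoke Proposition 4.6, which is tailored precisely to evaluate the rightmost sum and gives it equal to $r\,e(A)$. Substituting this identification into the displayed equality yields, in each of (a)--(d), exactly the formula claimed in Corollary 4.10.

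The one compatibility point to verify is that the genericity conditions on $\Lambda = (\lambda_{ij}) \in R_0^{sn}$ demanded by the Section 3 corollaries and by Proposition 4.6 can be imposed simultaneously. Each such condition cuts out a dense open subset of $k^{sn}$ (where $k = R_0/m_0$), so their intersection is again dense open and any $\Lambda$ with image in the intersection serves both applications at once. Beyond this, I anticipate no real obstacle: the argument is essentially a direct bookkeeping synthesis of the Section 3 corollaries with Proposition 4.6.
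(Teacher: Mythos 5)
Your outline matches the paper's own proof: apply Corollary 3.3 (resp. Corollary 3.5(a), (b), (c)) with $R=B$, $I=A_1B$, $t=s-1$, and then replace the $i=s$ contribution by $r\,e(A)$ via Proposition 4.6, after intersecting the various dense open genericity loci in $k^{sn}$. However, there is one hypothesis of Proposition 4.6 that you invoke without having verified, and the paper's proof consists almost entirely of supplying exactly that verification. Proposition 4.6 requires ${\rm dim}\,A_p\otimes_A G = d-s$ for every $p\in\mathfrak{P}$, and Corollary 4.10 deliberately does not restate this assumption; it must therefore be derived from the new hypothesis that $B$ is Cohen--Macaulay. The paper does this by observing that a Cohen--Macaulay ring is equidimensional and universally catenary, so Remark 4.7 applies and yields ${\rm dim}\,A_p\otimes_A G = d-s$ automatically. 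Without this step you have no license to invoke Proposition 4.6, so your proposal has a genuine (if small and easily repaired) gap. Once you insert the appeal to Remark 4.7, the rest of your argument — including the observation that the genericity requirements from Section 3 and from Proposition 4.6 can be met simultaneously because each is a dense open condition on $k^{sn}$ — is sound and coincides with the paper's.
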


\begin{proof}
Since $B$ is Cohen-Macaulay, it is equidimensional and universally catenary. By Remark 4.7, ${\rm dim}\,A_p\otimes_A G=d-s$ for every prime ideal $p\in \mathfrak{P}$. We are done by Corollaries 3.3, 3.5 and Proposition 4.6.
\end{proof}

\bigskip

\section{Applications to the special fiber ring.}

\noindent

In this section, we are going to apply our formulas to
the special fiber ring $k\otimes_R \mathscr{R} (I)$, where
$I\subseteq R$ is an ideal generated by forms of the same degree
in a standard graded $k$-algebra $R$. We have the following
theorem:

\begin{theo}
Let $R=k[R_1]$ be a reduced standard graded Noetherian ring of
dimension $d$ over an infinite  field $k$. Let
$I=(a_1,\ldots,a_n)R$ be an $R$-ideal of height $g>0$, where $a_1,
\ldots, a_n$ are homogeneous elements of degree $\delta >0$. Write
$s=\ell(I)$, $k[I_{\delta}]$  the $k$-algebra generated by the forms in $I$ of degree $\delta$, $R^{(\delta)}$  the $\delta$-th Veronese subring of $R$ and $G={\rm
gr}_{I_{\delta}R^{(\delta)}}(R^{(\delta)})$.  Let $\mathfrak{P}=\{p\in {\rm Spec}(k[I_{\delta}]) \,|\, {\rm
dim} \,k[I_{\delta}]/p={\rm dim} \,k[I_{\delta}]\}$. For every
prime ideal $p\in \mathfrak{P}$, assume ${\rm dim}\,k[I_{\delta}]_p\otimes_{k[I_{\delta}]}G=d-s$  and  ${\rm deg}_{k[I_{\delta}]_p}(k[I_{\delta}]_p\otimes_{k[I_{\delta}]}G)=r$ $($notice when $s=d$, $r={\rm rank}_{k[I_{\delta}]_p}(R^{(\delta)})_p$$)$.    For general elements $\Lambda=(\lambda_{ij})\in k^{s
n}$, let $ x_i=\sum_{j=1}^n \lambda_{ij}a_j$, $J_{i-1}=(x_1, \ldots, x_{i-1})R:_R \langle I \rangle$ and $\mathfrak{Q}_i=\{q\in {\rm Min}(J_{i-1}+I)\,|\, {\rm dim}\,R/q=d-i\}$ for $1 \leq i \leq s$.
 Then
\begin{align}
e(k[I_{\delta}])=e(R)\frac{\delta^{d-1}}{r}-
 \sum_{i=g}^{s-1}\sum_{q\in \mathfrak{Q}_i}
 j_{1}(\frac{I_{q}}{(x_1,\ldots,x_{i-1})_{q}})\,e(R/q)\frac{\delta^{d-i-1}}{r}.
\end{align}
\end{theo}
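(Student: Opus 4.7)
The plan is to apply Theorem 4.9 to the homogeneous inclusion $A = k[I_\delta] \subseteq B = R^{(\delta)}$, where $R^{(\delta)}$ is the $\delta$-th Veronese subring of $R$. As observed in the introduction, after the standard rescaling of grading both $A$ and $B$ become standard graded Noetherian $k$-algebras; one has ${\rm dim}\,A = \ell(I) = s$ (since $A$ is the special fiber ring of $I$) and ${\rm dim}\,B = {\rm dim}\,R = d$, while $A_1B = I_\delta R^{(\delta)}$ has height ${\rm ht}\,I = g > 0$. The hypothesis $g > 0$ ensures that $\mathfrak{Q}_0 = \emptyset$ for this inclusion, so the $i=0$ term in Theorem 4.9 vanishes.

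First I would verify the remaining hypotheses of Theorem 4.9 (equivalently, those of Proposition 4.6). Since $R$ is reduced, so is $R^{(\delta)}$ and hence also its subring $A = k[I_\delta]$; in particular $A_p$ is reduced for every $p \in \mathfrak{P}$. The conditions ${\rm dim}\,A_p \otimes_A G = d - s$ and ${\rm deg}_{A_p}(A_p \otimes_A G) = r$ are built into the hypothesis of Theorem 5.1. Thus Theorem 4.9 applies to $A \subseteq B$ with general $\Lambda = (\lambda_{ij}) \in k^{sn}$, and yields
\begin{align*}
e(R^{(\delta)}) = r\, e(k[I_\delta]) + \sum_{i=g}^{s-1}\ \sum_{q' \in \mathfrak{Q}_i'} j_1\!\left(\frac{(I_\delta R^{(\delta)})_{q'}}{(x_1,\ldots,x_{i-1})R^{(\delta)}_{q'}}\right)\,e(R^{(\delta)}/q'),
\end{align*}
where $\mathfrak{Q}_i'$ is the analogue, for $R^{(\delta)}$ with respect to $A_1B$, of the set $\mathfrak{Q}_i$ of Theorem 5.1.

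Next I would translate each quantity from $R^{(\delta)}$ back to $R$ via the natural homeomorphism ${\rm Proj}\,(R^{(\delta)}) \simeq {\rm Proj}\,(R)$ given by $q \mapsto q \cap R^{(\delta)}$. A Hilbert-polynomial computation, using $\dim_k R^{(\delta)}_n = \dim_k R_{n\delta}$, gives $e(R^{(\delta)}) = e(R)\,\delta^{d-1}$ and, more generally, $e(R^{(\delta)}/q') = e(R/q)\,\delta^{d-i-1}$ whenever ${\rm dim}\,R/q = d - i$. The bijection identifies $\mathfrak{Q}_i'$ with $\mathfrak{Q}_i$, and since the homogeneous localizations of $R$ and $R^{(\delta)}$ at corresponding primes of ${\rm Proj}$ are canonically related (by inverting one degree-one element of $R$, a suitable power of which lies in $R^{(\delta)}$), the local invariants $j_1\!\left(I_q/(x_1,\ldots,x_{i-1})R_q\right)$ and $j_1\!\left((I_\delta R^{(\delta)})_{q'}/(x_1,\ldots,x_{i-1})R^{(\delta)}_{q'}\right)$ coincide.

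Substituting these identifications into the displayed formula and solving for $e(k[I_\delta])$ produces precisely the formula of Theorem 5.1. The main technical obstacle is this last translation step: checking carefully that the local $j_1$-multiplicities and the underlying minimal-prime sets truly match under $R^{(\delta)} \leadsto R$. This amounts to showing that the associated graded modules ${\rm gr}_I(R)$ and ${\rm gr}_{I_\delta R^{(\delta)}}(R^{(\delta)})$ agree after localizing at corresponding primes on the affine open charts of ${\rm Proj}\,R$, which reduces to the standard fact that $R$ and $R^{(\delta)}$ have canonically isomorphic structure sheaves on ${\rm Proj}$.
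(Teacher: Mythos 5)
Your proposal takes the same route as the paper: apply Theorem 4.9 to the homogeneous inclusion $k[I_\delta] \subseteq R^{(\delta)}$ after rescaling, observe that $g > 0$ kills the $\mathfrak{Q}_0$ term, and then translate the resulting formula back to $R$ via the identification of ${\rm Proj}(R)$ with ${\rm Proj}(R^{(\delta)})$ and the relation $e(R^{(\delta)}) = e(R)\,\delta^{d-1}$. So the global strategy is exactly that of the paper.

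However, the translation step — which you correctly flag as the main technical obstacle — is not actually resolved by the argument you give. Appealing to the isomorphism of structure sheaves on ${\rm Proj}$ only gives you control over the \emph{homogeneous} localizations $R_{(q)} \cong R^{(\delta)}_{(q^{(\delta)})}$ (the degree-zero parts), whereas the invariants $j_1\bigl(I_q/(x_1,\dots,x_{i-1})R_q\bigr)$ are defined over the full local ring $R_q$, not the degree-zero part, and similarly for $R^{(\delta)}$. The paper bridges this gap by invoking the fact (cited from \cite{SUV}) that $R^{(\delta)}_{q^{(\delta)}} \subseteq R_q$ is an \emph{\'etale local extension}; flatness of this extension is then used twice: once to get $J_{i-1}^{(\delta)} R_q = J_{i-1} R_q$, which is what makes the bijection of ${\rm Proj}$'s actually identify the sets $\mathfrak{Q}_i$ and $\mathfrak{Q}_i'$ (a point your proposal does not address), and once to identify the lengths $\lambda\bigl(\Gamma_{\bar q}(\bar I^j/\bar I^{j+1})\bigr)$ computed over $R$ and over $R^{(\delta)}$, giving the equality of $j_1$-multiplicities. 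Without this \'etale/flatness statement your argument does not close; with it, your proof coincides with the paper's.
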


\begin{proof}
  Let $ J_{i-1}^{(\delta)} =(x_1,\ldots,x_{i-1}) R^{(\delta)}:_{R^{(\delta)}}
\langle I_{\delta}R^{(\delta) }\rangle,\,\,1\leq i\leq s. $ For every $q \in
{\rm Proj} \,(R)$, let $q^{(\delta)}=q\cap R^{(\delta)}$. Then
$R^{(\delta)}_{q^{(\delta)}}\subseteq R_q$ is an $\acute{{\rm e}}$tale local
extension \cite{SUV}. By flatness we have
$J_{i-1}^{(\delta)}R_q=J_{i-1}R_q$.  Since there are natural isomorphisms
from ${\rm Proj}(R)$ to ${\rm Proj}(R^{(\delta)})$ and from ${\rm
Proj}(R/I)$ to ${\rm Proj}((R/I)^{(\delta)})={\rm
Proj}(R^{(\delta)}/I_{\delta}R^{(\delta)})$, they induce a one to one
correspondence between the set $\mathfrak{Q}_i$ and the set of all
primes $q^{(\delta)}$ in $V(J_{i-1}^{(\delta)}+I_{\delta}R^{(\delta)})$ with dim
$R^{(\delta)}/q^{(\delta)}=d-i$ for each $i$ with $1\leq i\leq s$. Let
$\overline{R}^{(\delta)}=R^{(\delta)}/(x_1,\ldots,x_{i-1})R^{(\delta)}$,
$\overline{R}=R/(x_1,\ldots,x_{i-1})R$.
 Since
$\overline{q}^{(\delta)}\overline{R}_{\overline{q}}=\overline{q}\overline{R}_{\overline{q}}$,
by flatness we have
$$\lambda_{\overline{R}^{(\delta)}_{\overline{q}^{(\delta)}}}
(\Gamma_{\overline{q}^{(\delta)}\overline{R}^{(\delta)}_{\overline{q}^{(\delta)}}}
(\overline{I}_{\delta}^{j}\overline{R}^{(\delta)}_{\overline{q}^{(\delta)}}/
\overline{I}_{\delta}^{j+1}\overline{R}^{(\delta)}_{\overline{q}^{(\delta)}}))
=\lambda_{\overline{R}_{\overline{q}}}
(\Gamma_{\overline{q}\overline{R}_{\overline{q}}}
(\overline{I}^j\overline{R}_{\overline{q}}/\overline{I}^{j+1}\overline{R}_{\overline{q}}))
$$
for every $j\geq 0$. Thus we have
$j_{1}(I_{\delta}\overline{R}^{(\delta)}_{\overline{q}^{(\delta)}})
=j_{1}(I\overline{R}_{\overline{q}})$. Also recall
$e(R^{(\delta)})=e(R)\,\delta^{d-1}$ and
$e(R^{(\delta)}/(J^{(\delta)}_{i-1}+I_{\delta}R^{(\delta)}))=
e(R/(J_{i-1}+I))\,\delta^{d-i-1}$.

We rescale the grading of both $k[I_{\delta}]$ and $R^{(\delta)}$ so that
$k[I_{\delta}]\subseteq R^{(\delta)}$ is a homogeneous inclusion of standard
graded Noetherian algebras. Notice $k[I_{\delta}]\simeq k\otimes_R
\mathscr{R}(I)$,  dim $k[I_{\delta}]=\ell (I)=s$ and ${\rm dim}\, R^{(\delta)}=d$.
Thus we apply Theorems 4.1 and   4.9 to the homogeneous
inclusion $k[I_{\delta}]\subseteq R^{(\delta)}$ to get Equation (7).
\end{proof}

\bigskip

By Theorem 5.1 and the proof of Corollaries 3.3 and 3.5, we have the following corollary:

\begin{coro}

Let $R$ and $I$ be as in Theorem 5.1. Assume  $R$ is
Cohen-Macaulay.  Also, let $\mathfrak{P}=\{p\in {\rm Spec}(k[I_{\delta}]) \,|\, {\rm
dim} \,k[I_{\delta}]/p={\rm dim} \,k[I_{\delta}]\}$.      For every
prime ideal $p\in \mathfrak{P}$, assume  ${\rm deg}_{k[I_{\delta}]_p}(k[I_{\delta}]_p\otimes_{k[I_{\delta}]}G)=r$ $($notice when $s=d$, $r={\rm rank}_{k[I_{\delta}]_p}(R^{(\delta)})_p$$)$, where $k[I_{\delta}]$, $R^{(\delta)}$ and $G$ are defined as in Theorem 5.1. For general elements $\Lambda=(\lambda_{ij})\in k^{s
n}$, let $ x_i=\sum_{j=1}^n \lambda_{ij}a_j$, $ H_{i-1}=(x_1, \ldots,x_{i-1})R:_R I$ and  $\Lambda_{i-1}$ be the sub-matrix consisting of the first $i-1$ rows of $\Lambda$ for $1 \leq i \leq s$.

$($a$)$ Assume the ideal $I$ satisfies condition $G_s$ and Artin-Nagata property $AN^-_{s-2}$. Then
\begin{align}
 e(k[I_{\delta}])=e(R)\frac{\delta^{d-1}}{r}-e(R/I)\frac{\delta^{d-g-1}}{r}-
 \sum_{i=g+1}^{s-1}e(R/(H_{i-1}+I))\frac{\delta^{d-i-1}}{r}.\notag
\end{align}

$($b$)$ Assume $I_q$ is a complete intersection for every $q\in
{\rm Proj}\, (R)$. Then
\begin{align}
 e(k[I_{\delta}])=e(R)\frac{\delta^{d-1}}{r}-e(R/I)\frac{\delta^{d-g-1}}{r}-
 \sum_{i=g+1}^{s-1}e(R/(F_{i-1}+I))\frac{\delta^{d-i-1}}{r}.\notag
\end{align}
where $F_{i-1}={\rm
 Fitt}_0(I/(x_1,\ldots,x_{i-1})R)$.

$($c$)$ Assume  $I$ is a perfect ideal of height $2$ which
satisfies condition $G_s$. Write $\mu(I)=n$ and let $X_{n\times
(n-1)}$ be a matrix such that $I=I_{n-1}(X)$.   Then
\begin{align}
 e(k[I_{\delta}])=e(R)\frac{\delta^{d-1}}{r}-e(R/I)\frac{\delta^{d-3}}{r}-
 \sum_{i=3}^{s-1}e(R/(I_n(X \,|\, \Lambda_{
 i-1}^T)+I))\frac{\delta^{d-i
 -1}}{r}.\notag
\end{align}

$($d$)$Assume $I$ is a perfect Gorenstein ideal of height $3$
which satisfies condition $G_{s}$. Write $\mu(I)=n$ and let
$X_{n\times n}$ be an alternating matrix such that
$I={\rm Pf}_{n-1}(X)$. Let
 $T_{i-1}=\begin{pmatrix} X & \Lambda_{i-1}^T\\
 -\Lambda_{i-1} & 0 \end{pmatrix}$ and
 $J(T_{i-1})$ be the $R$-ideal generated by the Pfaffians of all
 principal sub-matrices of  $T_{i-1}$ which contain $X$ for $4\leq i\leq s-1$.
   Then
\begin{align}
e(k[I_{\delta}])=e(R)\frac{\delta^{d-1}}{r}-e(R/I)\frac{\delta^{d-4}}{r}-
\sum_{i=4}^{s-1}e(R/(J(T_{i-1})+I))\frac{\delta^{d-i-1}}{r}.\notag
\end{align}
\end{coro}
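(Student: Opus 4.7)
The plan is to start from Equation (7) of Theorem 5.1, which already expresses $e(k[I_\delta])$ in terms of local $j_1$-multiplicities of hyperplane sections of $I$ inside $R$, and to simplify each such local $j_1$-multiplicity sum using the residual intersection techniques developed in the proofs of Corollaries 3.3 and 3.5. The translation from $R^{(\delta)}$ to $R$ has already been built into Theorem 5.1 via the \'etale local extension $R^{(\delta)}_{q^{(\delta)}} \subseteq R_q$ and the identification $J_{i-1}^{(\delta)} R_q = J_{i-1} R_q$, so one may work directly with $R$, $I$, and general $\Lambda$.

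For the index $i = g$ in each of parts (a)--(d), the assumption $G_s$ (which holds in all four cases, in particular $G_{g+1}$) forces $I_q$ to be a complete intersection of height $g$ for every $q \in \mathfrak{Q}_g$. Hence $j_{1}(I_q/(x_1,\ldots,x_{g-1})_q) = e_{I_q}(R_q) = \lambda(R_q/I_q)$, and the associativity formula for multiplicities gives
\begin{align}
\sum_{q \in \mathfrak{Q}_g} j_{1}\!\bigl(I_q/(x_1,\ldots,x_{g-1})_q\bigr)\,e(R/q) \;=\; e(R/I).\notag
\end{align}

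For $g+1 \leq i \leq s-1$, the proofs of Corollaries 3.3 and 3.5, applied with $t = s-1$, show that under the respective hypotheses and for general $\Lambda$, the colon ideal $J_{i-1} = (x_1,\ldots,x_{i-1})R :_R \langle I\rangle$ admits an explicit description. Namely, $J_{i-1} = H_{i-1}$ in part (a) (with $H_{i-1}$ a geometric $(i-1)$-residual intersection of $I$ and $R/H_{i-1}$ Cohen-Macaulay by $AN^-_{s-2}$); $(J_{i-1}+I)_q = (F_{i-1}+I)_q$ locally at each $q \in \mathfrak{Q}_i$ in part (b); $J_{i-1} = I_n(X\,|\,\Lambda_{i-1}^T)$ in part (c); and $J_{i-1} = J(T_{i-1})$ in part (d). In each case one obtains $\height(J_{i-1}+I) = i$ and $\mathfrak{Q}_i \neq \emptyset$. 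Combining Equation (5) from the proof of Theorem 3.1, which yields $\lambda(R_q/(J_{i-1}R_q + x_i R_q)) = j_{1}(I_q/(x_1,\ldots,x_{i-1})_q)$ for $q \in \mathfrak{Q}_i$, with the associativity formula applied to the linear non-zerodivisor $x_i$ on $R/J_{i-1}$, produces
\begin{align}
\sum_{q \in \mathfrak{Q}_i} j_{1}\!\bigl(I_q/(x_1,\ldots,x_{i-1})_q\bigr)\,e(R/q) \;=\; e(R/(K_{i-1}+I)),\notag
\end{align}
where $K_{i-1}$ denotes $H_{i-1}$, $F_{i-1}$, $I_n(X\,|\,\Lambda_{i-1}^T)$, or $J(T_{i-1})$ respectively.

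Substituting these simplifications into Equation (7) for each $i$ with $g \leq i \leq s-1$ immediately yields the four claimed formulas. The main delicate point is ensuring that the residual intersection properties asserted for $I$ inside $R$ transfer faithfully to the corresponding properties of $I_\delta R^{(\delta)}$ inside $R^{(\delta)}$, which is the setting where Theorem 5.1 is actually proved; this follows from the \'etale local isomorphism between the relevant localizations of $R^{(\delta)}$ and $R$, since conditions $G_s$, $AN^-_{s-2}$, being a local complete intersection, and perfection of prescribed height with the specified Pfaffian or determinantal resolution are all preserved under \'etale base change, and the Fitting ideals and residual intersections commute with flat base change.
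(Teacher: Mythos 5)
Your proposal is correct and follows essentially the same route as the paper: substitute into Equation (7) of Theorem 5.1 and simplify each $j_1$-sum via the residual-intersection identifications established in the proofs of Corollaries 3.3 and 3.5 (with $t=s-1$), together with the associativity formula, yielding $e(R/I)$ at level $i=g$ and $e(R/(K_{i-1}+I))$ for $g+1\le i\le s-1$. The only remark is that your final paragraph about transferring residual-intersection hypotheses to $R^{(\delta)}$ is unnecessary, since Theorem 5.1 has already translated everything into quantities intrinsic to $R$ and $I$, so the simplifications are performed entirely within $R$ --- but that extra caution is harmless and does not affect the correctness of the argument.
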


\bigskip

Now we  are going to apply Theorem 5.1 and Corollary 5.2 to the following cases.

\bigskip

{\it Application 1.} If ${\rm char}\,k=0$ and $R$ is a domain,
 Theorem 5.1 yields an upper bound for the reduction number
$r_J(I)$ of $I$ with respect to any reduction $J$, because
$r_J(I)\leq e(k[I_{\delta}])$ by \cite{VA}. This gives a sharper bound
than that of \cite{SUV}.

{\it Application 2.} The ideal $I=(a_1,\ldots,a_n)R$ of Theorem
5.1 induces a rational map $ \phi: {\rm Proj}(R)\dashrightarrow
\mathbb{P}^{n-1}_k $ defined by $\phi(p)=(a_1(p),\ldots,a_n(p))$,
where $p\in {\rm Proj}(R)\backslash V(I)$ and $a_i(p)$ is $a_i$
evaluated at  $p$. Let ${\rm im}(\phi)$ be the image of $\phi$.
Then Theorem 5.1 provides a formula for the degree of ${\rm im}(\phi)$.
Indeed the homogeneous coordinate ring of ${\rm im}(\phi)$ is the
special fiber ring $k[I_{\delta}]$. Thus ${\rm deg}\,({\rm
im}(\phi))=e(k[I_{\delta}])$.

{\it Application 3} (Generalized Teissier's Pl$\ddot{{\rm
u}}$cker formula). Let $X\subseteq \mathbb{P}^n_k$ be a
hypersurface defined by a homogeneous irreducible polynomial $f$
of degree $\delta>0$ over an algebraically closed field $k$. Let
$X^{\prime}$ be the dual variety of $X$. Notice that in this case
$X^{\prime}$ is  the image of the Gauss map of $X$.  Set
$R=k[Y_0,\ldots,Y_n]/(f)$ and $I=(\partial f/\partial Y_0,
\ldots,\partial f/\partial Y_n)R$ (the Jacobian ideal of $R$).
Then  ${\rm dim}\,R=n$ and $I$ is generated by homogeneous elements of
degree $\delta-1$.  Since
$A(X^{\prime})\simeq k[I_{\delta-1}]$, we can apply Theorem 5.1 and Corollary 5.2 to
$R$ to get the degree $\delta^{\prime}$ of $X^{\prime}$.

Let $s=\ell(I)$ and $g={\rm ht}\,I$.
For general elements
$(\lambda_{ij})\in k^{s (n+1)}$,  let $ x_i=\sum_{j=0}^n
\lambda_{ij}\,\partial f/\partial Y_{j}$  and
$I_{i-1}=(x_1,\ldots,x_{i-1})R$ for $1\leq i \leq s$. Now for each
$i$, decompose $V(I_{i-1})$ into a union of irreducible components
$V_{i-1}^1,\ldots,  V_{i-1}^{b}$, where $V_{i-1}^j \nsubseteq {\rm
Sing\, } (X)$ for $1\leq j \leq a$ and $V_{i-1}^j \subseteq {\rm
Sing}\, (X)$ for $a+1\leq j \leq b$. Define $\mathfrak{Q}_i$ to be
the set of irreducible components of $(V_{i-1}^1\cup \cdots \cup
V_{i-1}^a )\cap {\rm Sing } \,(X)$ with dimension $n-i-1$.

Let $K={\rm Quot}(k[I_{\delta-1}])$, $G={\rm gr}_{(I_{\delta-1})R^{(\delta-1)}}(R^{(\delta-1)})$ and
$r={\rm deg}_K (K\otimes_{k[I_{\delta-1}]}G)$. Observe that if the dual variety
$X^{\prime}$ is again a hypersurface, i.e., $\ell (I)={\rm
dim}\,R=n$,  $r$ is just the degree of the Gauss map.

We have
\begin{align}
\delta^{\prime}=\frac{\delta(\delta-1)^{n-1}}{r}-
 \sum_{i=g}^{s-1}\sum_{q\in \mathfrak{Q}_i}
 j_{1}\big(I_q/(I_{i-1})_q)\,{\rm deg}
(q)\frac{(\delta-1)^{n-i-1}}{r}.\notag
\end{align}

This formula holds for a hypersurface with arbitrary dual variety and singularities. It
also gives a generalization of Teissier's Pl$\ddot{{\rm u}}$cker
formula.

Moreover,  letting $H_{i-1}=(x_1, \ldots, x_{i-1})R:_R I$ and $\Lambda_{i-1}$  be the sub-matrix consisting of
the first $i-1$ rows of $\Lambda=(\lambda_{ij})$ for $1\leq i \leq
s$,  we have the following cases:

$($a$)$
Assume   $I$   satisfies condition $G_s$ and Artin-Nagata property $AN^-_{s-2}$.  Then
\begin{align}
 \delta^{\prime}=&\frac{\delta(\delta-1)^{n-1}}{r}
 -{\rm deg}(V(I))\frac{(\delta-1)^{n-g-1}}{r}-\notag\\
 &-
 \sum_{i=g+1}^{s-1}{\rm deg}(V(H_{i-1}+I))\frac{(\delta-1)^{n-i-1}}{r}.\notag
\end{align}

$($b$)$ Assume $I$ is a complete intersection on $X$.   Then
\begin{align}
 \delta^{\prime}=&\frac{\delta(\delta-1)^{n-1}}{r}-{\rm deg}(V(I))\frac{(\delta-1)^{n-g-1}}{r}-\notag\\&-
 \sum_{i=g+1}^{s-1}{\rm deg}(V(F_{i-1} +I))\frac{(\delta-1)^{n-i-1}}{r}.\notag
\end{align}
where $F_{i-1}={\rm
 Fitt}_0(I/I_{i-1})$.

$($c$)$ Assume $I$ is a perfect ideal of height $2$ which
satisfies condition $G_s$. Write $\mu(I)=n$ and let $W_{n\times
(n-1)}$ be a matrix such that $I=I_{n-1}(W)$.  Then
\begin{align}
 \delta^{\prime}=&\frac{\delta(\delta-1)^{n-1}}{r}-{\rm deg}(V(I))\frac{(\delta-1)^{n-3}}{r}-\notag\\&-
 \sum_{i=3}^{s-1}{\rm deg}(V(I_n(W \,|\, \Lambda_{
 i-1}^T)+I)) \frac{(\delta-1)^{n-i-1}}{r}.\notag
\end{align}

$($d$)$ Assume $I$ is a perfect Gorenstein ideal of height $3$
which satisfies condition $G_s$.
  Write $\mu(I)=n$ and
  let $W_{n\times n}$ be an alternating matrix such that $I={\rm
Pf}_{n-1}(W)$. Let
 $T_{i-1}=\begin{pmatrix} W & \Lambda_{i-1}^T\\
 -\Lambda_{i-1} & 0 \end{pmatrix}$ and
 $J(T_{i-1})$ be the $R$-ideal generated by the Pfaffians of all
 principal sub-matrices of  $T_{i-1}$ which contain $W$.
Then
\begin{align}
\delta^{\prime}=&\frac{\delta(\delta-1)^{n-1}}{r}-{\rm
deg}(V(I))\frac{(\delta-1)^{n-4}}{r}-\notag\\&-
 \sum_{i=4}^{s-1}{\rm deg}(V(J(T_{i-1})+I))\frac{(\delta-1)^{n-i-1}}{r}.\notag
\end{align}

\bigskip

We will finish the paper by giving the following  example.
\medskip

Example: Surfaces in 3-Space (see \cite[9.3.7]{F} or \cite{TH})
\medskip

Let $X: y_1^2y_3-y_2^2y_0=0$ be a hypersurface of degree 3 in $\mathbb{P}_k^3$, where $k$ is an algebraically closed field.
Observe $R=k[y_0, y_1, y_2, y_3]/(y_1^2y_3-y_2^2y_0)=k[\overline{y}_0, \overline{y}_1, \overline{y}_2, \overline{y}_3]$ is the homogeneous coordinate ring of $X$ and  $I=V(\overline{y}_2^2, \overline{y}_1\overline{y}_3, \overline{y}_2\overline{y}_0, \overline{y}_1^2)$ is the Jacobian ideal of $R$.
The singularity locus ${\rm Sing}\,(X)$ of $X$ is equal to
$V(I)$ which is a line  consisting of double points and 2 pinch points: $(0,0,0,1), (1,0,0,0)$.
Since the special fiber ring $F(I)=k[\overline{y}_2^2, \overline{y}_1\overline{y}_3, \overline{y}_2\overline{y}_0, \overline{y}_1^2]$ is the homogeneous coordinate ring of the dual variety $X^{\prime}$,  we can use our formula to find the class $\delta^{\prime}={\rm degree}(X^{\prime})$ of the variety $X$.

Observe ${\rm dim}\, R=3$, $I$ is generated by forms of the same degree $2$, ${\rm ht}\,I=1$, $\ell(I)=3$ and  $r=1$ the degree of the Gauss map.
Let $x_1=\overline{y}_1\overline{y}_3-\overline{y}_2\overline{y}_0$, $x_2=\overline{y}_1^2-\overline{y}_2^2+\overline{y}_1\overline{y}_3$, then
$$\mathfrak{Q}_1=\{(\overline{y}_1, \overline{y}_2)\}, \,\,\mathfrak{Q}_2=\{(\overline{y}_0, \overline{y}_1, \overline{y}_2), (\overline{y}_1, \overline{y}_2, \overline{y}_3), (\overline{y}_0-\overline{y}_3, \overline{y}_1, \overline{y}_2)\},$$
and
$$\delta^{\prime}={\rm degree}(X^{\prime})=e(F(I))$$
$$=3\cdot 2^2-2\cdot \sum_{q\in \mathfrak{Q}_1} j_1(I_q)- \sum_{q\in \mathfrak{Q}_2} j_1(I_q/(\overline{y}_1\overline{y}_3-\overline{y}_2\overline{y}_0)_q)$$
$$=3\cdot 2^2-2\cdot 2-(2+2+1)=3.$$

Indeed, $X^{\prime}$ is a hypersurface which is isomorphic to $X$.

\bigskip

\begin{center}
Acknowledgements
\end{center}

I am very grateful to the referee for a meticulous reading and
 detailed recommendations which led to the improvements
in the text.

\end{document}